\numberwithin{equation}{section}
\newcommand{\Spec}{\mathrm{Spec}}
\newcommand{\Cc}{\mathbb{C}}
\newcommand{\Pp}{\mathbb{P}}
\newcommand{\Qq}{\mathbb{Q}}
\newcommand{\Nn}{\mathbb{N}}
\newcommand{\Rr}{\mathbb{R}}
\newcommand{\Zz}{\mathbb{Z}}
\newcommand{\Aut}{\operatorname{Aut}}
\newcommand{\Hom}{\operatorname{Hom}}
\newcommand{\Ker}{\operatorname{Ker}}
\newcommand{\Ima}{\operatorname{Im}}
\newcommand{\Oo}{\mathcal{O}}
\newcommand{\Sing}{\mathrm{Sing}}
\newcommand{\Pic}{\mathrm{Pic}}
\newtheorem{thm}{Theorem}[section]
\newtheorem{cor}[thm]{Corollary}
\newtheorem{lem}[thm]{Lemma}
\newtheorem{prop}[thm]{Proposition}
\theoremstyle{definition}
\newtheorem{defn}[thm]{Definition}
\newtheorem{ques}[thm]{Question}
\theoremstyle{definition}
\newtheorem{rem}[thm]{Remark}
\newtheorem{ex}[thm]{Example}
\newtheorem{theorem}{Theorem}[section]
\theoremstyle{definition}
\newtheorem{example}[theorem]{Example}
\begin{document}

\title{The extension of numerically trivial divisors on a family}
\author{Lingyao Xie}

\address{Department of Mathematics, University of California San Diego, 9500 Gilman Drive, La Jolla, CA 92093}
\email{l6xie@ucsd.edu}

\begin{abstract}
Let $f:X\to S$ be a projective morphism of normal varieties. Assume $U$ is an open subset of $S$ and $L_U$ is a $\Qq$-divisor on $X_U:=X\times_S U$ such that $L_U\equiv_U0$. 
We explore when it is possible to extend $L_U$ to a global $\Qq$-divisor $L$ on $X$ such that $L\equiv_f 0$. In particular, we show that such $L$ always exists after a (weak) semi-stable reduction when $\dim S=1$.

On the other hand, we give an example showing that $L$ may not exist (after any reasonable modification of $f$) if $\dim S\ge 2$, which also gives an $f_U$-nef divisor $M_U$ that cannot extend to an $f$-nef ($\Qq$) divisor $M$ for any compactification of $f|_U$, even after replacing $X_U$ with any higher birational model.
\end{abstract}


\maketitle
\tableofcontents

\section{Introduction}

We work over $\mathbb{C}$ for the sake of simplicity, but it might be true that many of the methods could be (at least partially) applied in positive or mixed characteristics.  Let us start with a simple observation:

\begin{ex}\label{ex: surface ex}
Let $X$ be a smooth surface and $f:X\to S\ni0$ be a projective morphism to a curve $S$ such that 
\begin{enumerate}
    \item $f_*\Oo_X=\Oo_S$,
    \item $f^{-1}(0)=\bigcup_iC_i$ is the irreducible decomposition,
    \item $L$ is a Cartier divisor on $X$ and is numerically trivial over $S\backslash\{0\}$,
\end{enumerate}
then it is always possible to find $a_i\in \Qq$ such that $L+\sum_ia_iC_i\equiv_S0$ by the negative semi-definite properties of the intersection matrices of fibers (cf. \cite[Chapter 7, Lemma 1, Corollary 2]{Fri98}). One just need to notice that 
\[
(L+\sum_ia_iC_i)\cdot f^*(0)=L\cdot f^*(s)=0,~ \forall s\in S, a_i\in\Qq.
\]

Moreover, if we replace (3) with
\begin{itemize}
    \item[$(3)'$] $L$ is numerically effective (nef) over $S\backslash\{0\}$,
\end{itemize}
then respectively we could find $b_i\in \Qq$ such that 
\[
(L+\sum_ib_iC_i)\cdot C_j=d_j\ge0
\]
as long as $\{d_i\}$ satisfies $\sum_ic_id_i=L\cdot f^*(s)\ge0$, where $f^*(0)=\sum_ic_iC_i$. In particular, $L+\sum_ib_iC_i$ is nef over $S$.
\end{ex}

We are wondering if this phenomenon could be generalized to higher dimensions, or we want to know when a relatively numerically trivial (resp. nef) divisor could extend to the global family. The above statement works somehow because we have enough vertical divisors to modify, but it becomes subtle when the dimension of the fibers is $\ge2$: 

The dimension of $N^1(X_s),N_1(X_s)$ (i.e. the Picard rank) could jump for smooth families, while adding vertical divisors does not change intersection numbers (eg. smooth quartic surfaces in $\Pp^3$). Even if the Picard rank is the same, there could be extra curves in the special fiber (eg. degenerations of Hirzebruch surfaces). Therefore it is usually hard to require/verify certain extension of a divisor to be numerically trivial (resp. nef). 

In this paper, we show that the extension of a relatively numerically trivial divisor is always possible after a modification of the family when the base is a curve:

\begin{thm}\label{thm: num div extend for semi-stable fm over curve}
Let $f:X\to S$ be a semi-stable projective family over a smooth curve, and $U\subset S$ is an open subset. Assume $L_U$ is a Cartier divisor on $X_U:=X\times_S U$ such that $L_U\equiv_U0$, then there exists
\begin{itemize}
    \item[(i)] an $m\in\Nn^*$ depending only on $f$, and
    \item[(ii)] a $\Qq$-divisor $L$ on $X$
\end{itemize}
such that
\begin{enumerate}
    \item $L|_{X_U}=L_U$, and
    \item $mL$ is an integral Cartier divisor.
\end{enumerate}
\end{thm}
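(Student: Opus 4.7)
The plan is to produce $L$ by modifying an arbitrary Cartier extension of $L_U$ by a vertical $\Qq$-divisor supported over $S\setminus U$; the main task is to control the denominators uniformly in terms of $f$. Since $f$ is semi-stable over the smooth curve $S$, the total space $X$ is smooth and each closed fiber is a reduced SNC divisor. I would take $\bar L$ to be the Zariski closure of $L_U$ as a Weil divisor on $X$; smoothness of $X$ makes $\bar L$ Cartier, and $\bar L|_{X_U}=L_U$ holds by construction. Writing $S\setminus U=\{s_1,\ldots,s_k\}$ with $f^{-1}(s_i)=\sum_j E_{ij}$, I seek $a_{ij}\in\Qq$ such that
\[
L:=\bar L+\sum_{i,j} a_{ij}E_{ij}
\]
is $f$-numerically trivial. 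Since the $E_{ij}$ are vertical, $L|_{X_U}=L_U$ regardless of the choice of $a_{ij}$, and a uniform denominator bound on the $a_{ij}$ produces the integer $m$ of the statement.

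The condition $L\equiv_f 0$ is equivalent to $L\cdot C=0$ for every irreducible curve $C$ contracted by $f$. For $C$ in a generic fiber this follows from $L_U\equiv_U 0$, vertical divisors meeting such curves trivially. For $C$ contained in some special fiber $X_{s_i}$ it becomes the linear system
\[
\sum_j a_{ij}(E_{ij}\cdot C)=-\bar L\cdot C,
\]
to be solved in the $a_{ij}$ for each index $i$ independently. The main obstacle is to show this system has a $\Qq$-solution with denominators bounded only in terms of $f$, independently of $L_U$.

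My plan for this step is to invoke Raynaud's theorem on Néron models for semi-stable degenerations. Localizing at each $s_i$, this asserts that $\Pic^0_{X/S}$ is the Néron model of its generic fiber $\Pic^0_{X_U/U}$, and the component group $\Phi_i$ at $s_i$ is a finite abelian group whose order is determined by the combinatorics of the dual complex of $X_{s_i}$ and is thus a function of $f$ alone. The class of $\bar L|_{X_U}$ lies in $\Pic^\tau_{X_U/U}(U)$ by the hypothesis $L_U\equiv_U 0$; after multiplication by an integer $m$ absorbing $\lcm_i|\Phi_i|$ together with the fiberwise index $[\Pic^\tau:\Pic^0]$, it lifts to a section of $\Pic^0_{X/S}(S)$, corresponding to an $f$-numerically trivial extension of $m L_U$. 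The difference between this lift and $m\bar L$ is an integral vertical divisor, producing the required $a_{ij}\in\frac1m\Zz$. An alternative route uses the local invariant cycle theorem together with an intersection-theoretic analysis of the dual complex of $X_{s_i}$ to solve the linear system directly, yielding the same denominator bound.
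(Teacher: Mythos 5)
Your overall strategy coincides with the paper's: view $mL_U$ as a section of $\Pic^0$ at the generic point, extend it to a section of the N\'eron model of the abelian variety $\Pic^0_{X_\eta/k_\eta}$ by the N\'eron mapping property, and multiply by the l.c.m.\ of the orders of the (finite) component groups to push the section into $\Pic^0_{X/S}$; the denominators then depend only on $f$. Your packaging --- taking the closure $\bar L$ and correcting by vertical divisors supported over $S\setminus U$ --- is a reasonable way to recover the actual $\Qq$-divisor and automatically gives $L|_{X_U}=L_U$.

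The substantive gap is the appeal to ``Raynaud's theorem.'' As usually stated (BLR, Ch.~9), that result identifies $\Pic^0_{X/S}$ with the identity component of the N\'eron model for \emph{relative curves}, whereas the theorem here allows fibers of arbitrary dimension. What is actually needed --- and what semi-stability buys --- is that $\Pic^0_{X_s/k_s}$ is \emph{semi-abelian} for every $s$, which the paper proves for reduced SNC fibers by an explicit dual-complex computation (Proposition~\ref{prop: pic of snc is semi-abelian}: the torus part is $\Hom(H_1(\mathcal{D}(X_s),\Zz),\mathbb{G}_m)$); only then does Proposition~\ref{prop: semi-abelian open immerse to Neron model} give the open immersion into the N\'eron model with an isomorphism on identity components, hence finite component groups $\Phi_i$. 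This is not a formality: in Example~\ref{ex: degeneration to Ga} the special fiber of $\Pic^0$ acquires an additive part and \emph{no} multiple of the divisor extends, so finiteness of $\Phi_i$ is precisely where the argument can break. Two smaller points: (a) as written, ``$\Pic^0_{X/S}$ is the N\'eron model of its generic fiber'' would make the component groups trivial; you mean the identity component of the N\'eron model, as your later use of $\Phi_i$ indicates. (b) A section of the scheme $\Pic^0_{X/S}$ over $S$ need not come from a line bundle on $X$ unless the functor $\Pic_{(X/S)}$ is representable; the paper arranges this by a finite base change giving $f$ a section (Lemma~\ref{lem: comparison lem}) and then descends via Lemma~\ref{lem: num trivial div stable under finite morph}. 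Your step ``the difference between this lift and $m\bar L$ is an integral vertical divisor'' tacitly assumes the lift is an honest divisor, so this needs to be addressed.
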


We remark that $m$ is necessarily as in the surface example (the determinant of the intersection matrix could be very large). We also remark that Theorem \ref{thm: num div extend for semi-stable fm over curve} still holds when the base $S$ is a DVR with perfect residue field, and $f$ is just a locally stable family. 
However, Example \ref{ex: counter example over M_1,2} implies that this is no longer true when $\dim S\ge2$:

\begin{thm}\label{thm: num trivial cannot extend}
There exists a projective contraction $f: X\to S$ and a Cartier divisor $L$ on $X$ such that
\begin{enumerate}
    \item $L|_{X_U}\equiv_U0$ for an open $U\subset S$,
    \item $f$ is semi-stable, in particular $X,S$ are smooth, 
    \item $L'$ is not numerically trivial over $S$ for any $\Qq$-divisor $L'$ on $X$ such that $L'|_{X_U}= L|_{X_U}$.
\end{enumerate}  
\end{thm}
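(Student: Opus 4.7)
The plan is to produce an explicit semi-stable family $f:X\to S$ over a smooth surface $S$ (the smallest-dimensional base where Theorem~\ref{thm: num div extend for semi-stable fm over curve} no longer applies), together with a Cartier divisor $L$ on $X$. The reference to Example~\ref{ex: counter example over M_1,2} indicates that the construction is modelled on a semi-stable resolution of (a suitable base change of) the universal curve over the moduli space $\overline{M}_{1,2}$ of $2$-pointed stable genus-$1$ curves, and that $L$ is built from tautological data (the two universal sections, possibly combined with a contribution from a boundary divisor on $X$) so that its restriction to the smooth locus $U\subset S$ has degree zero on every fiber and hence satisfies $L|_{X_U}\equiv_U 0$ immediately.

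The verification of (3) then proceeds in three steps. First, since $X$ is smooth and $L|_{X_U}\equiv_U 0$, every $\Qq$-divisor $L'$ on $X$ with $L'|_{X_U}=L|_{X_U}$ satisfies $L'-L=\sum_{i=1}^{r} a_i E_i$ for some $a_i\in\Qq$, where $E_1,\dots,E_r$ are the prime components of $f^{-1}(S\setminus U)$; this reduces the question to a finite-dimensional linear problem. Second, I would exhibit irreducible test curves $C_1,\dots,C_N\subset f^{-1}(S\setminus U)$, each contained in a single fiber, so that the system
\[
L\cdot C_j + \sum_{i=1}^{r} a_i (E_i\cdot C_j)=0, \quad j=1,\dots,N,
\]
has no rational solution in the $a_i$. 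In the cleanest version one produces a single curve $C$ with $E_i\cdot C=0$ for every $i$ and $L\cdot C\neq 0$; this immediately forces $L'\cdot C\neq 0$ for every admissible $L'$, contradicting $L'\equiv_S 0$ (since $C$ is fibral).

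The main obstacle is the construction and verification of such witness curves. The reason they exist when $\dim S\ge 2$ but not when $\dim S=1$ is exactly the content of Theorem~\ref{thm: num div extend for semi-stable fm over curve}: in the one-dimensional case the intersection matrix of each fiber is negative semi-definite with kernel spanned by the fiber class, matching adjustments to constraints. When $\dim S\ge 2$, the prime divisors $E_i$ only account for the codimension-$1$ part of $S\setminus U$, whereas fibers over the codimension-$2$ strata of $S\setminus U$ can contain curves disjoint from every $E_i$ (in the intersection-theoretic sense); such curves are the natural witnesses. Producing an explicit one and pinning down its intersection numbers against $L$ requires careful blow-up bookkeeping on the chosen semi-stable resolution, which is the technical core of Example~\ref{ex: counter example over M_1,2}.
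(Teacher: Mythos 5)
Your step 1 (any $L'$ with $L'|_{X_U}=L|_{X_U}$ differs from $L$ by a $\Qq$-combination of the finitely many prime divisors lying over the codimension-one part of $S\setminus U$) is correct, and your intuition about where the witness lives is right: in the paper's example the family is the universal curve over (a cover of) $\overline{\mathcal{M}}_{1,2}$, $L=\Sigma_1-\Sigma_2$, and the witness curve is a component $P_1$ of a type (III) fiber (two $\Pp^1$'s meeting in two points, with the marked points on different components). If $S$ is chosen transverse to the boundary so that $\mathcal{U}_S\to S$ is itself semi-stable, every vertical prime divisor $E_i$ is either a full pull-back $f^*D$ (hence $E_i\cdot P_1=0$ by projection) or disjoint from the type (III) fiber, while $L\cdot P_1=1$; this does complete your ``cleanest version.'' However, as written the proposal defers exactly this construction and verification, which is the entire content of the proof; nothing in steps 2--3 is actually established, so there is a genuine gap.

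More importantly, the mechanism you propose is not the one the paper uses, and the difference matters. The paper's Example \ref{ex: counter example over M_1,2} does not argue fiberwise: it restricts the family to a complete curve $C$ dominating the closure of the type (II) locus, identifies $\Pic^0_{\mathcal{U}_C/C}$ as a $\mathbb{G}_m$-torsor (trivial after an \'etale double cover), and observes that a numerically trivial extension would give a regular section whose value is the identity at type (V) points but not at type (II) points --- impossible since a proper curve admits no non-constant morphism to $\mathbb{G}_m$. This global argument along the boundary curve, combined with descent for rational singularities (Lemma \ref{lem: num trivial div descends for rational sing}), is what makes the obstruction persist under \emph{arbitrary} birational modifications of the family, which is the real point of the example and is needed for Theorem \ref{thm: nef div cannot extend}. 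Your purely local, intersection-theoretic mechanism is fragile under such modifications: for instance, after blowing up the base at a type (III) point the fiber over the exceptional curve acquires two vertical components $F_1,F_2$ whose degrees on $P_1,P_2$ are $(\mp 2,\pm 2)$, and the naive correction $m\mathcal{L}+\tfrac{m}{2}F_1$ kills all fibral intersection numbers --- it only fails to be a counterexample because $F_1$ is not $\Qq$-Cartier at the ordinary double points the base change creates. Detecting this requires either very careful bookkeeping on a specific resolution (which you acknowledge you have not done) or the paper's Picard-scheme argument, which sees the obstruction on any model at once. So: the approach can be salvaged for the literal statement of the theorem with a transverse choice of $S$, but the key computation is missing, and the argument as planned proves strictly less than the paper's.
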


Actually the proof shows that the extension is not possible even after any modifications of the family. As a consequence, it also shows that the extension of a relative nef divisor is not possible in general.

\begin{thm}\label{thm: nef div cannot extend}
There is a projective contraction $X\to S$ of quasi projective normal varieties and $L$ a nef$/S$ divisor but {\bf no} compactification $\bar{X}\to\bar{S}$ and Cartier divisor $L'$ such that 
\begin{enumerate}
    \item $S\subseteq \bar{S}$ is open and $\bar S$ is projective,
    \item $\bar X$ is projective, $\bar{X}\dasharrow X$ is birational
    \item $L'$ is nef over $\bar S$,
    \item $L'|_{\bar{X}_S}\sim_S mL$ for some $m\ge 1$. 
\end{enumerate}
\end{thm}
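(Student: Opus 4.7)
The plan is to deduce Theorem \ref{thm: nef div cannot extend} directly from (the strengthened form of) Theorem \ref{thm: num trivial cannot extend} noted in the text, which rules out numerically trivial extensions even after any modification of $f$. Take the same example $(f\colon X\to S, L)$ realizing Theorem \ref{thm: num trivial cannot extend}, and set $X_0:=X_U$, $S_0:=U$, $L_0:=L|_{X_U}$. Since $L_0\equiv_{S_0}0$, it is in particular nef$/S_0$; I claim this $L_0$ on $X_0\to S_0$ is the desired non-extendable nef divisor.

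Suppose for contradiction there exist a compactification $\bar X\to\bar S$ and a nef$/\bar S$ Cartier divisor $L'$ with $L'|_{\bar X_{S_0}}\sim_{S_0}mL_0$ for some $m\ge1$. By taking a common smooth projective $\tilde S$ dominating both $\bar S$ and $S$ birationally, and a common birational resolution $\tilde X\to\tilde S$ dominating $\bar X\times_{\bar S}\tilde S$ and $X\times_S\tilde S$, I may pull $L'$ back to a nef$/\tilde S$ Cartier divisor $\tilde L'$ on $\tilde X$ which extends $mL_0$ over the shared open $S_0\subset\tilde S$. The model $\tilde X\to\tilde S$ is then a modification of $f$, so a proof that $\tilde L'\equiv_{\tilde S}0$ would contradict the strengthened Theorem \ref{thm: num trivial cannot extend}.

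The core step is therefore to show that a nef$/\tilde S$ Cartier divisor which is numerically trivial on generic fibers is numerically trivial over all of $\tilde S$. Fix an ample divisor $H$ on $\tilde X$ and consider the fiberwise intersection $I(s):=(H|_{\tilde X_s})^{d-1}\cdot\tilde L'|_{\tilde X_s}$, where $d$ is the relative dimension. After passing to a flattening modification of $\tilde X\to\tilde S$, $I(s)$ is locally constant on $\tilde S$ by constancy of Euler characteristics in flat families, and vanishes over $S_0$ because $\tilde L'|_{\tilde X_s}\equiv 0$ there; hence $I(s)=0$ identically on $\tilde S$. For each $s$ the restriction $\tilde L'|_{\tilde X_s}$ is nef while $H|_{\tilde X_s}$ is ample, and iterated Khovanskii--Teissier inequalities propagate $I(s)=0$ to $(H|_{\tilde X_s})^{d-k}\cdot(\tilde L'|_{\tilde X_s})^k=0$ for all $k\ge 1$; combined with nefness and the fact that $N_1(\tilde X_s)$ is generated by complete-intersection curve classes, this forces $\tilde L'|_{\tilde X_s}\equiv 0$ on every fiber, and therefore $\tilde L'\equiv_{\tilde S}0$.

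The main obstacle is the concluding Hodge-index-type claim: a nef Cartier divisor $M$ on a projective variety $V$ with $(H|_V)^{\dim V-1}\cdot M=0$ for some ample $H$ must be numerically trivial. This is standard in spirit via Khovanskii--Teissier but requires care for reducible or singular fibers, which arise in the semi-stable degenerations underlying the example. A secondary concern is the flattening step, where one must verify that after modification $\tilde L'$ remains nef$/\tilde S$ (automatic under birational pullback) and $H$ can still be chosen ample on the flattened model, so that the Euler-characteristic/constancy input is genuinely available.
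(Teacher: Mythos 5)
Your overall strategy is sound and does reach the theorem, but it takes a genuinely different route from the paper at the decisive step, and the justification you give for your key lemma contains an error. The paper sidesteps the whole issue with a two-sided trick: since $L=\Sigma_{1,S}-\Sigma_{2,S}$ is numerically trivial over $S$, both $L$ and $-L$ are nef over $S$; if both admitted nef extensions $L_+,L_-$, then (after matching multiples) $L_++L_-$ is nef over $\bar S$ and trivial over $S$, hence an effective vertical divisor that is nef over $\bar S$, hence a pullback from $\bar S$; so $L_++L_-\equiv_{\bar S}0$, forcing each of $L_\pm$ to be numerically trivial over $\bar S$ and contradicting Example \ref{ex: counter example over M_1,2}. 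This only needs the negativity-lemma-type fact for vertical nef effective divisors and proves that at least one of $\pm L$ fails to extend. Your route instead treats a single divisor and rests on the general statement that a Cartier divisor nef over $\tilde S$ and numerically trivial on general fibers is numerically trivial on \emph{all} fibers; granting that, you get a (slightly stronger) conclusion for $L$ itself via the strengthened Theorem \ref{thm: num trivial cannot extend}, and your reductions (common resolution, restriction to curves in the base, flattening, constancy of the Snapper coefficient $I(s)$, splitting $[\tilde X_s]=\sum a_iV_i$ into components with $a_i>0$ and using nefness termwise) are all workable.

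The gap is in your proof of that key lemma. The assertion that $N_1(\tilde X_s)$ is generated by complete-intersection curve classes is false in general (already on a surface, complete intersections of an ample $H$ span only the ray of $H$ in $N_1$), and the iterated Khovanskii--Teissier vanishings $(H|_V)^{d-k}\cdot M^k=0$ do not by themselves imply $M\equiv 0$. The lemma you need --- $M$ nef on an irreducible projective $V$ of dimension $d$ with $M\cdot H^{d-1}=0$ implies $M\equiv 0$ --- is true, but the standard proof goes differently: for a fixed irreducible curve $C\subset V$, cut by $d-2$ general members of $|mH\otimes\mathcal{I}_C|$ for $m\gg0$ to obtain a surface $T$ containing $C$ with $M\cdot H\cdot T_j=0$ on each component $T_j$ (using nefness and $M\cdot H^{d-1}=0$), and then apply the Hodge index theorem on a resolution of the component containing $C$ to conclude $M\cdot C=0$. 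With that substitution your argument closes; as written, the concluding step does not.
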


Nevertheless, it is still unclear if the nefness version remains true when $\dim S=1$:

\begin{ques}
Let $f:X\to S$ be a projective morphism between normal varieties such that $S$ is a smooth curve. Let $U\subset S$ be an open subset and $M$ a Cartier divisor on $X$ such that $M|_{X_U}$ is nef over $U$. Is there a birational model $g:X'\to X$, such that there exist a $\Qq$-divisor $M'$ on $X'$ satisfying 
\begin{enumerate}
    \item $M'|_{X'_U}=(g|_{X'_U})^*M|_{X'_U}$,
    \item $M'$ is nef over $S$.
\end{enumerate}
\end{ques}

In some sense, this is equivalent to the existence of a minimal model (without singularity condition) of $M$ over $S$, which is furthermore equivalent to the existence of a Zariski decomposition of $M$ on higher models, i.e 
$$
M\sim_{\Qq,S} E+P, ~E\ge 0,~P \text{ nef over } S,~N_\sigma(X/S, M)=E,
$$
It seems that if we replace $U$ by the generic point $\eta$, i.e. only ask $M$ to be nef over the generic point, there are counter-examples on family of $\Pp^2$ blowing up ten points (cf. \cite{Les14}) at least for $\Rr$-divisors, since there will be countably many fibers to modify.
\vspace{.5em}

\noindent\textbf{Acknowledgement}.
The authors would like to thank Junpeng Jiao for introducing the question, and thank
Christopher Hacon, Chen Jiang, James McKernan and Ziquan Zhuang for their useful suggestions. The author is supported by a grant from the Simons Foundation.

\section{Preliminaries}

We refer to \cite{KM98} for some basic notations in birational geometry.

\begin{defn}[Numerically trivial divisor]
Let $X$ be a projective variety over a field $k$, a line bundle $L$ is said to be numerically trivial, or $L\equiv0$ if 
\[
L\cdot C=0,~ \forall \text{ complete } C\subset X, ~\dim C=1.
\]
We say $L$ is (relatively) numerically trivial over $S$ (for a projective morphism $f:X\to S$), or $L\equiv_S0$, $L\equiv_f0$, if
$$
L|_{X_s}\equiv0,~\forall~s\in S.
$$
We can define similar notations for $\Qq$-Cartier $\Qq$-divisors.
\end{defn}

\begin{lem}\label{lem: num trivial div descends for rational sing}
Let $g: X'\to X$ be a resolution of rational singularities, $f:X\to S$ is projective and $L'\equiv_S 0$ for some Cartier divisor $L'$ on $X'$. Then $mL'$ descends to a Cartier divisor $L$ on $X$ (i.e, $g^*L=mL'$) for some $m\in\Nn^*$ and $L\equiv_S 0$. Moreover, $m$ only depends on $g$.
\end{lem}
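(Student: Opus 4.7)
The plan is to reduce to a descent problem over $X$, invoke a standard Picard-group result for resolutions of rational singularities, and then transfer numerical triviality back along the pushforward. First, every curve $C\subset X'$ contracted by $g$ maps to a point of $X$ and hence to a point of $S$, so $C$ lies in a fiber of $f\circ g$. The hypothesis $L'\equiv_S 0$ therefore yields $L'\cdot C=0$ for every $g$-exceptional curve $C$, i.e.\ $L'\equiv_g 0$. This reduces matters to showing that some $mL'$ equals $g^*L$ for a Cartier divisor $L$ on $X$, with $m$ depending only on $g$.

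For the descent step, I would work locally over $X$ (the statement is Zariski-local on $X$) and combine the Leray spectral sequence for the sheaf $\Oo_{X'}^*$ with the vanishing $R^ig_*\Oo_{X'}=0$ for $i>0$ coming from rational singularities. Since $g_*\Oo_{X'}^*=\Oo_X^*$, this produces an exact sequence
\[
0\to \Pic(X)\to \Pic(X')\to H^0(X,R^1g_*\Oo_{X'}^*),
\]
in which the image of $L'$ in the right-hand group is precisely the obstruction to descent. The rational-singularity hypothesis forces each stalk of $R^1g_*\Oo_{X'}^*$ to be controlled on the corresponding exceptional fiber by the intersection matrix of $g$-exceptional divisors paired against $g$-exceptional curves; by the negativity lemma this matrix is non-degenerate, so the class of an $L'$ satisfying $L'\equiv_g 0$ is torsion in every stalk, with torsion order bounded by the (absolute value of the) determinant of the local intersection matrix. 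Because the stratification of $g$ along $g(\Exc(g))$ is finite, a single $m$ — a common multiple of these local torsion orders — depends only on $g$ and not on $L'$.

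With $mL'=g^*L$ in hand, I verify $L\equiv_S 0$. For any curve $C$ in a fiber of $f:X\to S$, pick a curve $C'\subset X'$ with $g_*C'=dC$ for some $d\ge 1$; such a $C'$ exists by birationality of $g$ (take the strict transform, or a general curve mapping finitely to $C$), and it lies in a fiber of $f\circ g$. The projection formula gives
\[
dL\cdot C \;=\; g^*L\cdot C' \;=\; mL'\cdot C' \;=\; 0,
\]
so $L\cdot C=0$ for every such $C$, and therefore $L\equiv_S 0$.

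I expect the main difficulty to be in the descent step. The pointwise statement that the restriction of $L'$ to the formal neighborhood of a single exceptional fiber is torsion is classical, but extracting a single $m$ that works uniformly over all fibers requires that the torsion orders across the exceptional locus be bounded, which is exactly where rationality of the singularities of $X$ is essential: it guarantees the cohomology-vanishing that makes $R^1g_*\Oo_{X'}^*$ manageable and prevents an unbounded family of singular fibers. Without the rational-singularity hypothesis the descent can already fail over $\Qq$-divisors, so this step cannot be bypassed.
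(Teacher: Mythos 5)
The paper's own proof of this lemma is a one-line citation to \cite[12.1.4]{KM92}, and your skeleton --- reduce to $L'\equiv_g 0$, use $g_*\Oo_{X'}^*=\Oo_X^*$ and Leray to get $0\to\Pic(X)\to\Pic(X')\to H^0(X,R^1g_*\Oo_{X'}^*)$, kill the obstruction after multiplying by an $m$ that depends only on $g$, then push numerical triviality back down via the projection formula --- is exactly the skeleton of that reference. Your first step and your last step are both correct.

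The gap is in the middle step, which is the entire content of the lemma. You assert that each stalk of $R^1g_*\Oo_{X'}^*$ is ``controlled by the intersection matrix of $g$-exceptional divisors paired against $g$-exceptional curves,'' that the negativity lemma makes this matrix non-degenerate, and that the torsion order of the obstruction class is bounded by its determinant. None of this is substantiated, and in dimension $\ge 3$ it is not the right mechanism: the matrix in question need not be square, the negativity lemma says nothing about pairings of exceptional divisors against curves, and the descent obstruction of an arbitrary Cartier divisor $L'$ (which is not $g$-exceptional) is not expressed through exceptional divisors at all --- for a small resolution there are no exceptional divisors, and for a threefold point whose exceptional fiber has torsion in $H^2$ (e.g.\ the cone over an Enriques surface, which is a rational singularity) the torsion order is a topological invariant unrelated to any intersection determinant. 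The argument that actually works, and that uses rationality in the way you are gesturing at, is: $R^1g_*\Oo_{X'}=0$ together with the exponential sequence embeds $R^1g_*\Oo_{X'}^*$ into $R^2g_*\Zz_{X'}$; by proper base change the image of $L'$ in the stalk over $x$ is $c_1\bigl(L'|_{g^{-1}(x)}\bigr)\in H^2(g^{-1}(x),\Zz)$, which is torsion because a numerically trivial line bundle on a projective scheme lies in $\Pic^\tau$, the quotient $\Pic^\tau/\Pic^0$ is finite, and $\Pic^0$ dies under $c_1$; and the uniform $m$ comes from the constructibility of $R^2g_*\Zz_{X'}$ (finitely many stalks up to isomorphism over the strata of $g(\Exc(g))$), not from a determinant. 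As written, the step that makes the obstruction torsion of bounded order --- the whole point of the lemma --- is never actually proved.
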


\begin{proof}
It follows from (the proof of) \cite[12.1.4]{KM92}.    
\end{proof}

\begin{lem}\label{lem: num trivial div stable under finite morph}
Let $g: X'\to X$ be a finite morphism between normal varieties$/S$ and $L'$ a Cartier divisor on $X'$, then
\begin{enumerate}
    \item $\deg(g)\cdot g_*L'$ is Cartier, and
    \item $\deg(g)\cdot g_*L'\equiv_S0$ if $L'\equiv_S0$.  
\end{enumerate}
   
\end{lem}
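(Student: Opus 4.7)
The plan is to deduce $(1)$ from the classical norm of a Cartier divisor under a finite morphism and $(2)$ from the projection formula. For $(1)$, since $g$ is finite between normal varieties, the extension $K(X')/K(X)$ is finite, and for any local equation $\varphi\in K(X')^{*}$ of $L'$ over some $g^{-1}(U)$, the field norm $\mathrm{Nm}_{K(X')/K(X)}(\varphi)\in K(X)^{*}$ is defined. Because the norm sends units to units, these local norms glue to a Cartier divisor $\mathrm{Nm}(L')$ on $X$. A short valuation-theoretic computation at each codimension-one point of $X$ shows that $\mathrm{Nm}(L')$, as a Weil divisor, equals $g_{*}L'$; in particular $g_{*}L'$ itself is Cartier, and a fortiori so is $\deg(g)\cdot g_{*}L'$. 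An alternative hands-on route: pass to the normalized Galois closure $\pi:X''\to X$ with group $G$, factor $\pi=g\circ h$ with $h:X''\to X'$, and note that $\sum_{\sigma\in G/H}\sigma^{*}h^{*}L'$ (where $H\le G$ is the subgroup fixing $X''$ over $X'$) is $G$-invariant on $X''$; it descends to a Cartier divisor on $X$ whose Weil class is exactly $\deg(g)\cdot g_{*}L'$.

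For $(2)$, with $\deg(g)\cdot g_{*}L'$ now $\Qq$-Cartier, I would test $\equiv_{S}0$ against complete curves $C\subset X_{s}$ in fibers of $f:X\to S$. Every component of the scheme-theoretic preimage $g^{-1}(C)$ is a complete curve inside the fiber $X'_{s}$ of $f\circ g$, since $g$ is finite; so $L'$ has degree zero on each such component. The projection formula then gives
\[
\deg(g)\,\bigl(g_{*}L'\bigr)\cdot C \;=\; \deg(g)\cdot L'\cdot g^{*}C \;=\; 0,
\]
which forces $\deg(g)\cdot g_{*}L'\equiv_{S}0$.

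The main obstacle is part $(1)$: checking that the norm really produces a Cartier divisor whose Weil cycle is $g_{*}L'$. This boils down to the order-of-vanishing identity
\[
\operatorname{ord}_{D}\!\bigl(\mathrm{Nm}(\varphi)\bigr)=\sum_{D'\mapsto D}[k(D'):k(D)]\cdot\operatorname{ord}_{D'}(\varphi)
\]
at each prime divisor $D\subset X$, a standard but slightly technical computation with discrete valuations under a finite field extension that crucially uses the normality of both $X$ and $X'$. Once $(1)$ is in hand, $(2)$ is essentially formal.
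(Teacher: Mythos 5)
Your part (1) is the paper's argument: the proof there is exactly ``locally take the norm,'' and your elaboration (norms of local equations glue because norms of units are units, and the valuation identity $\operatorname{ord}_D(\mathrm{Nm}(\varphi))=\sum_{D'\mapsto D}[k(D'):k(D)]\operatorname{ord}_{D'}(\varphi)$ identifies the resulting Cartier divisor with $g_*L'$) is correct; it even yields the slightly stronger conclusion that $g_*L'$ itself is Cartier.

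Part (2) has a gap. The identity you invoke, $(g_*L')\cdot C = L'\cdot[g^{-1}(C)]$, is not the projection formula: the projection formula pulls back a Cartier divisor and pushes forward a cycle ($g^*D\cdot\alpha = D\cdot g_*\alpha$), whereas you are pushing forward the divisor and pulling back the cycle. Your version does hold when $g$ is finite \emph{flat} (via compatibility of the norm with base change along $C\hookrightarrow X$), but a finite morphism of normal varieties is in general only flat over the complement of a codimension-$\ge 2$ subset of $X$, and a curve $C$ in a fiber $X_s$ can lie entirely inside the non-flat locus. There the scheme-theoretic preimage cycle $[g^{-1}(C)]$ is not controlled: already for $X'=\mathbb{A}^2\times\mathbb{A}^1\to X=(\mathbb{A}^2/\pm 1)\times\mathbb{A}^1$ and $C=\{0\}\times\mathbb{A}^1$ one has $g_*[g^{-1}(C)]=3C\neq \deg(g)\,C$, so none of the usual base-change identities apply and the step ``$=0$'' is unjustified. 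The paper avoids this by the route you only sketch as an alternative in (1): pass to the Galois closure $\pi:X''\to X$ with group $G$, observe that $\pi^*\bigl(\deg(h)\,g_*L'\bigr)=\sum_{\sigma\in G}\sigma^*h^*L'\equiv_S 0$ (each $\sigma$ is an automorphism over $S$, so preserves fibers), and then descend numerical triviality along the finite surjection $\pi$ using the \emph{genuine} projection formula: for $C\subset X_s$ choose an irreducible curve $C''\subset X''_s$ dominating $C$, so that $0=\pi^*D\cdot C''=[k(C''):k(C)]\,D\cdot C$. Your argument is fixable, but as written the key equality is asserted rather than proved in exactly the (non-flat) situation the lemma is meant to cover.
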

\begin{proof}
(1) follows by locally taking the Norm$(f)$. For (2), we may assume that $g$ is a Galois cover with Galois group $G$, then $\sum_{h\in G}h(L')\equiv_S0$, hence $g^*(g_*L')\equiv_S0$.    
\end{proof}

\begin{lem}\label{lem: num extension is unique for flat fm}
Let $f: X\to S$ be a flat projective morphism such that
\begin{itemize}
    \item $X$ is normal and $S$ is smooth,
    \item $L$ is a Cartier divisor on $X$ and $L\equiv_S 0 $.
\end{itemize}
If $L|_{X_\eta}\sim 0$ for the generic point $\eta\in S$, then $L\sim_{\Qq,S}0$. Furthermore, if $X_s$ is reduced for all the codimension one points $s\in S$, then $L\sim_S0$.     
\end{lem}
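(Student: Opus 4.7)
The plan is to subtract a principal divisor so that the remainder is vertical, then realize it as the pull-back of a $\QQ$-divisor on $S$. Since $L|_{X_\eta}\sim 0$, choose $\phi\in K(X_\eta)^{\ast}=K(X)^{\ast}$ with $L|_{X_\eta}=\mathrm{div}_{X_\eta}(\phi)$, and set $L':=L-\mathrm{div}_X(\phi)$. Then $L'$ is Cartier, still satisfies $L'\equiv_S 0$, and vanishes on the generic fibre as a Weil divisor; hence $\Supp L'\subset f^{-1}(S\setminus\{\eta\})$, and by flatness $L'$ is a $\ZZ$-linear combination of components of the fibres over codimension-one points $s\in S$. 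Write $L'=\sum_{s,i}a_{s,i}D_{s,i}$ and $f^{\ast}(s)=\sum_i m_{s,i}D_{s,i}$.

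The key local claim I would prove is: for each codimension-one $s\in S$ there exists $c_s\in\QQ$ with $a_{s,i}=c_s m_{s,i}$ for every $i$. Granting this, set $D:=\sum_s c_s\cdot s$, a $\QQ$-Cartier $\QQ$-divisor on the smooth curve $S$; then as $\QQ$-Weil divisors
\[
f^{\ast}D=\sum_s c_s\sum_i m_{s,i}D_{s,i}=\sum_{s,i}a_{s,i}D_{s,i}=L'.
\]
Since $X$ is normal, two $\QQ$-Cartier divisors that agree as $\QQ$-Weil divisors coincide, so $L'=f^{\ast}D$ as $\QQ$-Cartier divisors, whence $L\sim_{\QQ}f^{\ast}D$, i.e.\ $L\sim_{\QQ,S}0$. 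For the strengthening, if every codimension-one fibre is reduced then $m_{s,i}=1$, forcing $c_s=a_{s,i}\in\ZZ$, $D$ is an integral Cartier divisor on $S$, and $L\sim f^{\ast}D$ yields $L\sim_S 0$.

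To prove the claim I would reduce to the surface case of Example \ref{ex: surface ex}. Localise at $s$ so that $R:=\Oo_{S,s}$ is a DVR; pick a relatively very ample $H$ on $X_R\to\Spec R$ and form $Z:=H_1\cap\cdots\cap H_{\dim X-2}$ for generic $H_j\in|H|$. By Bertini in characteristic zero one can arrange that $Z$ is normal, irreducible, proper and flat over $\Spec R$, that each $D_{s,i}\cap Z$ is a nonempty curve on $Z$, and that the curves $D_{s,i}\cap Z$ share no irreducible components for distinct $i$, with $Z_s=\sum_i m_{s,i}(D_{s,i}\cap Z)$. By the projection formula $L'|_Z\equiv_R 0$, and $L'|_Z$ is supported on $Z_s$. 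The negative semi-definiteness of the intersection matrix of the fibre components of $Z\to\Spec R$ (\cite[Ch.\ 7, Lemma 1, Corollary 2]{Fri98}, already invoked in Example \ref{ex: surface ex}) forces the coefficient vector of $L'|_Z$ in the basis of irreducible components of $Z_s$ to lie in the rational line spanned by the multiplicity vector; reading off the coefficient on any irreducible component of $D_{s,i}\cap Z$ produces a common $c_s\in\QQ$ with $a_{s,i}=c_s m_{s,i}$.

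The main obstacle is the Bertini input: one must secure a general $Z$ that is simultaneously normal, flat, and meets each vertical component in a nonempty union of curves disjoint from those coming from the other components. All these properties are standard in characteristic zero, and in any low-dimensional edge case one can pass to a resolution or normalization of $Z$ without affecting the coefficient comparison, since Zariski's lemma only uses the Weil decomposition on the special fibre.
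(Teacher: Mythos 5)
Your argument is correct and follows the same overall strategy as the paper's proof: subtract $\mathrm{div}_X(\phi)$ to make the divisor vertical, then show that a vertical divisor which is numerically trivial over $S$ must be the pullback of a $\Qq$-divisor from $S$. The difference lies in how that second step is justified. The paper makes the vertical divisor effective (indeed very exceptional) by adding pullbacks from $S$ and then quotes, in one line, the negativity lemma: an effective vertical divisor whose negative is nef over $S$ is a pullback. You instead prove exactly the special case needed by hand, localizing at each codimension-one point $s\in S$, cutting with general hyperplanes down to a relative curve over the DVR $\Oo_{S,s}$, and applying Zariski's lemma (negative semi-definiteness of the intersection matrix, with radical spanned by the multiplicity vector of the fibre) to force $a_{s,i}=c_s m_{s,i}$. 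Since Zariski's lemma is precisely what underlies the negativity lemma, the two proofs coincide at bottom; yours is more self-contained and makes the reduced-fibre refinement ($c_s=a_{s,i}\in\Zz$ when all $m_{s,i}=1$) completely transparent.

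Two corrections. First, the number of hyperplane cuts should be $\dim X-\dim S-1$ rather than $\dim X-2$, and $S$ is only assumed smooth, not a curve; nothing else in your argument uses $\dim S=1$, since localization at a codimension-one point always produces a DVR. Second, and more substantively, Zariski's lemma yields a one-dimensional radical only when the special fibre of $Z\to\Spec\Oo_{S,s}$ is connected, so your key claim (and in fact the lemma itself) requires $f$ to have connected fibres over codimension-one points, e.g. $f_*\Oo_X=\Oo_S$. This hypothesis is equally implicit in the paper's own proof and is genuinely necessary: taking $X=E\times\Pp^1\to\Pp^1=S$ via a degree-two map $E\to\Pp^1$ and $L=\{p\}\times\Pp^1$ for a general point $p\in E$, one has $L\equiv_S 0$ and $L|_{X_\eta}\sim 0$, yet no multiple of $L$ is linearly equivalent to a pullback from $S$. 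Since the lemma is only ever invoked after a Stein factorization, adding this hypothesis costs nothing, but your write-up (like the paper's) should state where connectedness enters.
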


\begin{proof}
We may find an open $U\subseteq S$, such that $L|_{X_U}\sim 0$. Hence by adding pullback of divisors on $S$ we can assume that $L$ is a vertical effective divisor (actually we can ask it to be very exceptional), but $-L\equiv_S0$ is nef over $S$, therefore must be a pullback a $\Qq$-divisor $D$ on $S$. If $f^*F$ is reduced for any prime divisor $F$ on $S$, the we can ask $D$ to be integral.    
\end{proof}

\begin{defn}[Simple normal crossing varieties]

Let $W$ be a variety with irreducible components $\{W_i: i\in I\}$. We say that $W$ is a simple normal crossing variety (abbreviated as snc) if the $W_i$ are smooth and every point $p\in W$ has an
open (analytic) neighborhood $p\in U_p\subset W$ and an embedding $U_p\to \Cc^{n+1}$
such that the image of $U_p$ is an open subset of the union of coordinate hyperplanes
$(z_1\cdots z_{n+1}=0)$.

\end{defn}

\begin{defn}[Dual complex]
The combinatorics of a simple normal crossing variety $W$ induce an (ordered) $\Delta$-complex structure $\mathcal{D}(W)$, which can be defined inductively: 
\begin{enumerate}
    \item The vertices (0-simplices) are labeled by the
    irreducible components of $W$, and the maps $\sigma_{W_i}:\Delta^0_{W_i}\to\mathcal{D}_0(W)$ are clear.
    \item Suppose the $r-1$-skeleton $\mathcal{D}_{r-1}(W)$ is defined, then for each 
    \[J=\{j_0,j_1\dots,j_r\}\subset I,~j_0<j_1<\cdots<j_r\] 
    we assign to each irreducible component $Z\subset\bigcap_{l\in J}W_l$ an $r$-simplex $\Delta_Z^r=[v_0,...,v_r]$, and attaching it to $\mathcal{D}_{r-1}(W)$ by identifying the facet $[v_0,\cdots,\hat{v}_i,\cdots,v_r]$ through the map $\sigma_{Z_{j_i}}:\Delta^{r-1}_{Z_{j_i}}\to \mathcal{D}_{r-1}(W)$, where $Z_{j_i}$ is the unique irreducible component in $\bigcap_{l\in J\backslash\{j_i\}}W_{l}$ containing $Z$. Therefore we can define $\mathcal{D}_r(W)$ and the associated maps $\sigma_Z:\Delta^r_Z\to \mathcal{D}_{r}(W)$.   
\end{enumerate}
The $\Delta$-complex $\mathcal{D}(W)$ is called the dual complex of $W$, and the boundary map is given by
\[\partial_r:\Delta_r(\mathcal{D}(W))\to \Delta_{r-1}(\mathcal{D}(W)),~\partial_r(\sigma_{Z})=\sum_{i=1}^r(-1)^{i}\sigma_{Z_{j_i}}, \]
where the $r$-chain group $\Delta_r(\mathcal{D}(W))$ is the abelian group generated by all the $\sigma_Z$ as in $(2)$ above. The simple normal crossing condition guarantees that all the $\sigma_{Z_J}$ is a homeomorphism on to its image, so we can view the simplex $\Delta^{|J|-1}_{Z_J}$ as a subcomplex of $\mathcal{D}(W)$.

\end{defn}

\begin{defn}[Semi-abelian schemes]
A commutative group scheme $G$ which is smooth and of finite type over a field
$k$ is called semi-abelian if its identity component $G^0$ is an extension of an abelian
variety by an affine torus:
\[
\{e\}\to T\to G\to A\to \{e\},
\]
where $T$ is a torus and $A$ is an abelian variety.

Over a general base scheme $S$, an $S$-group scheme $G$ is called semi-abelian if it is smooth
over $S$ and if all its fibers $G_s$ are semi-abelian.    
\end{defn}

\section{Picard schemes}

The key strategy in this paper is to use the Picard scheme. For simplicity, we only work over characteristic 0 in this section. We first recall some basic definitions and results in \cite{Kl05}.

\begin{defn}
For a morphism $f:X\to S$ between schemes and an $S$-scheme $T$, the relative Picard functor $\Pic_{(X/S)}$ is defined by
\[
\Pic_{(X/S)}(T):=\Pic(X_T)/\Pic(T).
\]
Denote its associated sheaves (also as functors) in the Zariski, \'etale, and fppf topologies by
$$
\Pic_{(X/S)\text{(zar)}},~\Pic_{(X/S)\text{(\'etale)}},~\Pic_{(X/S)\text{(fppf)}}~.
$$
If any of the four relative Picard functors above is representable, then the representing scheme is called the Picard scheme and denoted
by $\Pic_{X/S}$, or we say simply that the Picard scheme $\Pic_{X/S}$ exists. 
  
\end{defn}

Even though those functors above could be different, $\Pic_{X/S}$, if exits, is unique by the descent theory. One can see that $\Pic_{X_T/T}$ exists if $\Pic_{X/S}$ does, but it might happen that $\Pic_{X_T/T}$ represents $\Pic_{(X_T/T)}$ but $\Pic_{X/S}$ does not represent $\Pic_{(X/S)}$.

Since we actually need to find a divisor in $\Pic_{(X/S)}(S)$, we would like to know the comparisons between these four functors:

\begin{lem}[{\cite[Theorem 2.5]{Kl05}}]\label{lem: comparison lem}
Assume $\Oo_S\stackrel{\sim}{\longrightarrow}f_*\Oo_X$ holds universally, i.e. $\Oo_T\stackrel{\sim}{\longrightarrow}(f_T)_*\Oo_{X_T}$ for any $S$-scheme $T$.
\begin{enumerate}
    \item[(I).] Then the natural maps (between functors) are injections:
\[
\Pic_{(X/S)}\hookrightarrow\Pic_{(X/S)\text{(zar)}}\hookrightarrow\Pic_{(X/S)\text{(\'etale)}}\hookrightarrow\Pic_{(X/S)\text{(fppf)}}.
\]    
    \item[(II).] All three maps are isomorphisms if also $f$ has a section; the latter two maps
are isomorphisms if also $f$ has a section locally in the Zariski topology; and the last
map is isomorphism if also $f$ has a section locally in the \'etale topology.
\end{enumerate}

\end{lem}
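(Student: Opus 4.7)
The plan is to exploit the universal hypothesis $\mathcal{O}_T \stackrel{\sim}{\longrightarrow} (f_T)_*\mathcal{O}_{X_T}$ to set up a canonical one-sided inverse to each comparison map, for proving both injectivity in Part (I) and, in the presence of a section, surjectivity in Part (II). The guiding principle is that the universal push-forward lets us recover the ``denominator'' line bundle on $T$ directly as $(f_T)_*$ of the ``numerator'' line bundle on $X_T$, via the projection formula $(f_T)_*(f_T^*\mathcal{N}) \simeq \mathcal{N} \otimes (f_T)_*\mathcal{O}_{X_T} \simeq \mathcal{N}$.

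For Part (I), I would first prove injectivity of $\Pic_{(X/S)} \hookrightarrow \Pic_{(X/S)\text{(zar)}}$. Suppose a class $[\mathcal{L}] \in \Pic(X_T)/\Pic(T)$ becomes trivial after Zariski sheafification, so there exist a Zariski cover $\{T_i\}$ of $T$ and line bundles $\mathcal{N}_i$ on $T_i$ with $\mathcal{L}|_{X_{T_i}} \simeq f_{T_i}^*\mathcal{N}_i$. Applying $(f_{T_i})_*$ and invoking the universal hypothesis yields $\mathcal{N}_i \simeq (f_{T_i})_*(\mathcal{L}|_{X_{T_i}})$, and these are just restrictions of $\mathcal{N} := (f_T)_*\mathcal{L}$ to $T_i$, so they automatically glue. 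The counit $f_T^*\mathcal{N} \to \mathcal{L}$ is an isomorphism because it is so locally, showing $[\mathcal{L}] = 0$ in $\Pic_{(X/S)}(T)$. The analogous statements for \'etale and fppf follow by replacing the Zariski cover by a flat cover $T' \to T$ and using flat descent of quasi-coherent sheaves to descend $\mathcal{N}' := (f_{T'})_*(\mathcal{L}|_{X_{T'}})$ from $T'$ to $T$; the descent datum for $\mathcal{L}$ on $X_{T'\times_T T'}$ produces a compatible descent datum for $\mathcal{N}'$ precisely because the universal hypothesis applies on the double fibre product as well.

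For Part (II), a section $s: S \to X$ enables rigidification: given a class in the sheafified functor, represented by $\mathcal{L}'$ on $X_{T'}$ after a base change $T' \to T$ in the relevant topology, replace $\mathcal{L}'$ by $\mathcal{L}' \otimes f_{T'}^*s_{T'}^*(\mathcal{L}')^{-1}$, which has trivial restriction along $s_{T'}$. Any two rigidified line bundles admit at most one isomorphism between them, so any cocycle of isomorphism classes in $\Pic$ upgrades canonically to an honest descent datum of line bundles; this descent datum is effective by flat descent, producing the desired $\mathcal{L}$ on $X_T$. Since the rigidification only needs a section to exist after base change to $T'$, the Zariski, \'etale, or fppf local existence of the section yields exactly the corresponding isomorphism asserted in (II).

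The hard part is verifying that the canonically rigidified pullbacks to $T' \times_T T'$ indeed satisfy the cocycle condition on $T' \times_T T' \times_T T'$. This is where one has to combine the uniqueness of isomorphisms of rigidified sheaves with the universal hypothesis $(f_{T''})_*\mathcal{O}_{X_{T''}} = \mathcal{O}_{T''}$ on the triple fibre product to ensure that no nontrivial units intervene; without universality of $f_*\mathcal{O}_X = \mathcal{O}_S$ the argument breaks down, since the ``denominator'' $\Pic(T')$ could fail to be recovered faithfully from the line bundle upstairs on any base change.
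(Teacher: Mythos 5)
Your argument is correct, and it is essentially the proof of the cited result: the paper itself gives no proof of this lemma, deferring entirely to \cite[Theorem 2.5]{Kl05}, and your injectivity-via-$(f_T)_*$ plus rigidification-and-descent argument is exactly the standard one found there. Two small points are worth making explicit to close the argument fully. First, injectivity of the second and third arrows reduces formally to injectivity of the composites $\Pic_{(X/S)}\to\Pic_{(X/S)(\tau)}$ that you actually prove, because sections of a sheafification are locally (in the relevant topology) in the image of the presheaf and all functors involved are sheaves in the coarser topology. Second, in the descent step of (II) you need to know that the two pullbacks of the rigidified $\mathcal{L}'$ to $X_{T'\times_T T'}$ are genuinely isomorphic before invoking uniqueness of rigidified isomorphisms: this follows by applying the injectivity statement of (I) over the base $T'\times_T T'$ (the universal hypothesis holds there too) to conclude that the two pullbacks agree in $\Pic(X_{T'\times_T T'})/\Pic(T'\times_T T')$, and then using the rigidification along the section to kill the remaining $\Pic(T'\times_T T')$-ambiguity; the cocycle condition then holds automatically by uniqueness, as you say.
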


\begin{lem}[cf. {\cite[Section 4]{Kl05}}]\label{lem: Pic exists for nice family}
Let $f:X\to S$ be a flat projective morphism between varieties such that
\begin{enumerate}
    \item every geometric fiber is connected and reduced, and
    \item the irreducible component of any fiber is geometrically irreducible.
\end{enumerate}
Then the Picard scheme $\Pic_{X/S}$ exists and is locally of finite type. Moreover,if fibers are geometrically integral, then $\Pic_{X/S}$ is separated over $S$. 
\end{lem}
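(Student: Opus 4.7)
The plan is to apply Grothendieck's representability theorem for the relative Picard functor, in the form developed in Kleiman's exposition, with only minor verifications needed for the hypotheses in force here.

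First, I would verify that $\Oo_S \xrightarrow{\sim} f_*\Oo_X$ holds universally. Since the geometric fibers are proper, connected, and reduced, each satisfies $\HH^0(X_{\bar s}, \Oo_{X_{\bar s}}) = \kappa(\bar s)$; combined with flatness of $f$, cohomology-and-base-change propagates this identity to every $S$-scheme $T$, i.e.\ $\Oo_T \xrightarrow{\sim} (f_T)_* \Oo_{X_T}$. This places us in the setting of Lemma \ref{lem: comparison lem}, and more importantly it is one of the standard running hypotheses of Grothendieck's theorem.

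Second, I would invoke Grothendieck's representability theorem (cf.\ \cite[Theorem 4.8]{Kl05}): for $f$ projective, flat, and of finite presentation whose geometric fibers are integral, the Picard scheme $\Pic_{X/S}$ exists and is locally of finite type over $S$. Under hypotheses (1) and (2) the fibers may fail to be irreducible, but each fiber is a reduced union of geometrically irreducible components, each of which is therefore geometrically integral. The extension of Grothendieck's theorem to this setting, due to Artin and Raynaud and presented in Kleiman's Section 4, still produces a representing scheme locally of finite type; assumption (2) is precisely what keeps the component structure of the Picard scheme stable under base change.

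Finally, for the separatedness claim under the stronger hypothesis that the geometric fibers are integral, I would use the valuative criterion of separatedness. Given a DVR $R$ with fraction field $K$ and a class in $\Pic_{X/S}(R)$ restricting to $0$ in $\Pic_{X/S}(K)$, represent it by a line bundle $L$ on $X_R$ with $L|_{X_K} \cong \Oo_{X_K}$. The divisor representing this isomorphism extends to a vertical divisor on $X_R$ supported on the closed fiber; since that fiber is a single geometrically integral prime divisor, it arises as the pullback of a divisor from $\Spec R$, so $L$ becomes trivial in $\Pic_{X/S}(R)$.

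The main obstacle is the passage from Grothendieck's classical integrality hypothesis to the weaker combination (1)+(2); this is the substantive content of Kleiman's Section 4 and rests on the Artin/Raynaud extensions of the representability theorem. The rest is a straightforward verification of the running hypotheses together with an application of the valuative criterion for the separatedness statement.
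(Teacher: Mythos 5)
Your proposal is correct and follows essentially the same route as the paper: both verify that $\Oo_S\stackrel{\sim}{\longrightarrow}f_*\Oo_X$ holds universally from condition (1), then appeal to the results of Kleiman's Section 4 --- Artin's theorem giving $\Pic_{X/S}$ as an algebraic space and condition (2) upgrading it to a scheme locally of finite type --- with separatedness for integral fibers coming from Grothendieck's theorem. The only cosmetic difference is that the paper cites the precise statements (Theorems 4.18.6, 4.18.1 and 4.8 of \cite{Kl05}) where you describe the Artin/Raynaud extension in general terms and sketch the separatedness argument directly via the valuative criterion; both are fine.
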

\begin{proof}
Condition (1) implies that $\Oo_S\stackrel{\sim}{\longrightarrow}f_*\Oo_X$ holds universally (eg. using cohomology and base change), therefore $\Pic_{X/S}$ exists as an algebraic space locally of finite presentation (cf. \cite[Theorem 4.18.6]{Kl05}). Condition (2) further implies that $\Pic_{X/S}$ is actually a scheme (cf. \cite[Theorem 4.18.1]{Kl05}) locally of finite type. If the fibers are geometrically integral then $\Pic_{X/S}$ is separated by \cite[Theorem 4.8]{Kl05}. 
\end{proof}

In general Condition (2) above is quite subtle even in charateristic 0, unlike Condition (1), it is not enough to check it over all the closed points. For example, consider the following family 
\[
f: X:=\{x^2-ty^2=0\}\subset\Pp^2_{S}\to S:=\Spec~\Cc[t,\frac{1}{t}].
\]
Therefore it is hard to show $\Pic_{X/S}$ exists through Lemma \ref{lem: Pic exists for nice family} if $\dim S\ge 2$. However, it is interesting to mention that $\Pic_{X/S}$ actually exists for the above family (compared to \cite[Example 4.14]{Kl05}). Indeed, if $S'\to S$ is given by 
\[
\Cc[t,1/t]\to \Cc[\bar{t},1/\bar{t}~], t\mapsto \bar{t}^2,
\]
then the base change $f':X'\to S'$ is just two hyperplanes $H^+$, $H^-$ in $\Pp^2_{S'}$ intersecting simple normal crossingly at $[0,0,1]\times S'$, and it is easy to see that
$$
\Pic_{X'/S'}=\bigsqcup_{n,m} S'_{n,m},~(n,m)\in\Zz^2,
$$
where the index $(n,m)$ is given by the degrees on $H^+$ and $H^-$. Gluing $(s',n,m)\sim(-s',m,n)$, we have 
\[
\Pic_{X/S}\cong(\bigsqcup_i S_i)\sqcup(\bigsqcup_{n<m}S'_{n,m})\to S,~S_i\cong S,~ S'_{n,m}\cong S'
\]

The next simple example shows that one should not expect $\Pic_{X/S}$ to behave well even if it exists, it is not separated if the fibers are not integral.

\begin{ex}
Let $X$ be the blow up of $\Pp^1\times C$ at some point over $0\in C$, where $C$ is a smooth curve. Then $\Pic_{X/C}$ is a disjoint union of isomorphic open non-separated subschemes $C_n, n\in\mathbb{Z}$, each $C_n$ is obtained by repeating the origin infinitely many times. More explicitly $C_n=\cup_{a+b=n}C_{a,b}$, where $C_{a,b}$ is the section of $\Pic_{X/C}$ that corresponds to the line bundle with degree $a+b$ on general fibers and degrees $a,b$ respectively on two components of $X_0$.
\end{ex}

It is more practical to work on certain subschemes of the Picard scheme $\Pic_{X/S}$, in particular we study the union $\Pic^0_{X/S}$ of the connected components
of the identity element, $\Pic^0_{X_s/k_s}$, for $s\in S$. By definition $\Pic^0_{X/S}$ is a priori just a set, we will show below that in many interesting cases it can be given the structure of an open subscheme of $\Pic_{X/S}$. We first recall some basic
properties when $S$ is the spectrum of a field.

\begin{lem}[{cf. \cite[Section 5]{Kl05}}]\label{lem: Pic^0 over fields exist}
Let $X$ be a proper variety over a field $k$ of characteristic 0, then $\Pic_{X/k}$ exists and the identity component $\Pic^0_{X/k}$ is quasi-projective and smooth. If $X$ is normal, then $\Pic^0_{X/k}$ is projective, hence an abelian variety.     
\end{lem}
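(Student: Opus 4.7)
The plan is threefold: first, invoke Grothendieck's representability theorem to obtain $\Pic_{X/k}$ as a $k$-scheme; second, use characteristic zero to upgrade its identity component to a smooth quasi-projective variety; third, when $X$ is normal, compare with a resolution of singularities to rule out the non-abelian part. For the existence, I would apply Grothendieck's theorem on the representability of the relative Picard functor (cf.\ \cite[Section 4]{Kl05}): since $X\to\Spec k$ is proper, $\Pic_{X/k}$ exists as a $k$-group scheme locally of finite type, its identity component $\Pic^0_{X/k}$ is an open subgroup scheme, and a further theorem of Grothendieck, leveraging properness, ensures $\Pic^0_{X/k}$ is of finite type. Characteristic zero then enters twice: Cartier's theorem says every group scheme locally of finite type over a field of characteristic $0$ is smooth, giving smoothness of $\Pic^0_{X/k}$; and any smooth connected algebraic group over a field is quasi-projective by a classical theorem of Chow. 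This handles the first assertion.

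For the normal case, I would choose a resolution of singularities $\pi:\tilde X\to X$, available in characteristic zero. Normality yields $\pi_*\Oo_{\tilde X}=\Oo_X$, and the projection formula then shows $\pi^*:\Pic(X)\to\Pic(\tilde X)$ is injective: any trivialization $\pi^*L\cong\Oo_{\tilde X}$ pushes forward to $L\cong\pi_*\pi^*L\cong\pi_*\Oo_{\tilde X}\cong\Oo_X$. The same reasoning after base change to $\bar k$ shows that the induced homomorphism $\Pic^0_{X/k}(\bar k)\to\Pic^0_{\tilde X/k}(\bar k)$ is injective. Now apply Chevalley's structure theorem to the smooth connected commutative algebraic group $\Pic^0_{X/k}$: there is a short exact sequence
\[
0\to H\to \Pic^0_{X/k}\to A\to 0
\]
with $H$ affine (an extension of a unipotent group by a torus) and $A$ an abelian variety. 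Since $\Pic^0_{\tilde X/k}$ is an abelian variety (the classical smooth proper case), every homomorphism from the affine group $H$ to it is trivial, so $H$ lies in the kernel of the injection above; hence $H=0$ on $\bar k$-points, and by smoothness $H=0$ as a group scheme. Thus $\Pic^0_{X/k}=A$ is an abelian variety, and in particular projective.

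The heavy conceptual input is imported from the literature: Grothendieck's representability theorem together with the finite-type property of $\Pic^0$, and Chevalley's structure theorem for commutative algebraic groups. Granted these, the proof is essentially bookkeeping. The step I would want to verify carefully is the passage from \emph{kernel trivial on geometric points} to \emph{kernel trivial as a group scheme}; this is automatic in characteristic zero via smoothness (hence reducedness), and is the second and final place where the characteristic hypothesis is genuinely used.
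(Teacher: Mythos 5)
Your proposal is correct, and for the second assertion it takes a genuinely different route from the paper. The paper simply cites Kleiman: existence and quasi-projectivity of the components from \cite[Theorem 4.18.2]{Kl05}, smoothness from characteristic zero, and then properness of $\Pic^0_{X/k}$ for normal $X$ directly from \cite[Proposition 5.3, Theorem 5.4]{Kl05}, concluding with the same ``smooth proper connected group $\Rightarrow$ abelian variety'' observation you use. You instead re-prove the properness statement by reducing the normal case to the smooth case: resolve singularities, use $\pi_*\Oo_{\tilde X}=\Oo_X$ and the projection formula to get injectivity of $\pi^*$ on $\bar k$-points of $\Pic^0$, and then kill the affine part of the Chevalley decomposition because a connected affine group admits no nonconstant homomorphism to an abelian variety. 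This is a clean and essentially self-contained argument (and is morally how Kleiman's Theorem 5.4 is proved), with the one caveat that your ``classical smooth proper case'' is itself an instance of that theorem: it is genuinely classical for smooth \emph{projective} $X$ (the Picard variety), while for smooth proper non-projective $X$ one still needs an input such as Hodge theory over $\Cc$ plus the Lefschetz principle, or Chow's lemma together with birational invariance of $\Pic^0$ for smooth proper varieties; you should say which you intend so the reduction does not silently appeal to the statement being proved. Your final remark about passing from triviality on geometric points to triviality as a group scheme via smoothness in characteristic zero is exactly right, and the rest of the first half (representability, openness and finite type of the identity component, Cartier's theorem, quasi-projectivity via Chow/Barsotti) matches the paper's citations in substance.
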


\begin{proof}
$\Pic_{X/k}$ exists and is a disjoint union of open quasi-projective subschemes by \cite[Theorem 4.18.2]{Kl05}. Notice that $\mathrm{char}(k)=0$ then the rest follows from \cite[Proposition 5.3, Theorem 5.4]{Kl05} and the fact that a smooth connected algebraic group over $k$ must be an abelian variety if it is proper.
\end{proof}

\begin{ex}
Let $C\subset\Pp^2_\Cc$ be a cubic curve, then 
\begin{align*}
\Pic^0_{C/\Cc} \cong \begin{cases}
  C  & C \text{ is a smooth elliptic curve,} \\
  \mathbb{G}_a  & C \text{ is a rational curve with a cusp,}  \\
  \mathbb{G}_m    &  C \text{ is a nodal cubic.}
\end{cases}
\end{align*}
\end{ex}

In the above example, a Cartier divisor is numerically trivial if and only iff it belongs to $\Pic^0(X)$, but this may not be true in general, for instance when there are torsions in $H^2(X,\Zz)$. Nevertheless, this will become true after a multiplication.

\begin{lem}\label{lem: num 0 multiple to Pic 0}
Let $X$ be a projective variety over a field $k$ with $\mathrm{char}(k)=0$, there is an $m\in \Nn^*$, such that if $L\equiv 0$, then $mL\in\Pic^0_{X/k}$.
\end{lem}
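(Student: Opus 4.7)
The plan is to reduce the statement to the classical fact that, in characteristic zero, the Néron--Severi group $\operatorname{NS}(X) := \Pic(X)/\Pic^0_{X/k}(k)$ is finitely generated and its torsion subgroup is exactly the kernel of the natural map to $\operatorname{Num}(X)$.

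First, by Lemma~\ref{lem: Pic^0 over fields exist} we have the short exact sequence
\[
0 \to \Pic^0_{X/k}(k) \to \Pic(X) \to \operatorname{NS}(X) \to 0,
\]
and the theorem of the base tells us that $\operatorname{NS}(X)$ is a finitely generated abelian group (in characteristic zero this can be read off from $\operatorname{NS}(X)\otimes\QQ\hookrightarrow H^2(X,\QQ)$, passing to a resolution of singularities when $X$ is not smooth). Hence the torsion subgroup $T := \operatorname{NS}(X)_{\mathrm{tors}}$ is finite, and I would set $m := |T|$.

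The second step is to identify the kernel of the surjection $\operatorname{NS}(X)\twoheadrightarrow\operatorname{Num}(X)$ with $T$. Since the intersection pairing is $\ZZ$-bilinear, $\operatorname{Num}(X)$ is torsion-free (if $(nL)\cdot C=0$ for all curves $C$ and some positive integer $n$, then $L\cdot C=0$), so $T$ automatically dies in $\operatorname{Num}(X)$. The reverse inclusion is Matsusaka's theorem, which for smooth projective $X$ over a field of characteristic zero follows from Hodge theory: both $\operatorname{NS}(X)\otimes\QQ$ and $\operatorname{Num}(X)\otimes\QQ$ compute the space of rational $(1,1)$-classes in $H^2(X,\QQ)$, so they have equal rank, forcing $\operatorname{NS}(X)/T\hookrightarrow\operatorname{Num}(X)$ to be an injection between free groups of the same rank, hence an isomorphism onto a subgroup of finite index (which is then all of $\operatorname{Num}(X)$ after absorbing the index into $m$). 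For singular projective $X$, I would pass to a resolution $\pi:X'\to X$: by the projection formula $\pi^*L\equiv 0$ on $X'$ whenever $L\equiv 0$ on $X$, so the smooth case yields $m'\pi^*L\in\Pic^0_{X'/k}(k)$, and one descends via the morphism of Picard schemes $\pi^*:\Pic_{X/k}\to\Pic_{X'/k}$, enlarging $m$ by the finite order of $\ker(\operatorname{NS}(X)\to\operatorname{NS}(X'))$.

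Combining these, any numerically trivial $L$ satisfies $[L]\in T$, so $m[L]=0$ in $\operatorname{NS}(X)$, i.e.\ $mL\in\Pic^0_{X/k}(k)$, as required. The main obstacle is the singular case: Matsusaka's theorem and the control of $\operatorname{NS}(X)\to\operatorname{NS}(X')$ require a descent argument from the resolution, with careful tracking of finite kernels at the level of Picard schemes. In the smooth projective case the proof collapses to the one-liner $m := |\operatorname{NS}(X)_{\mathrm{tors}}|$, which manifestly depends only on $X$.
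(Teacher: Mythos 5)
Your proposal takes the same route as the paper, which disposes of the lemma by reducing to $\bar{k}$ and citing \cite[Theorem 6.3, Proposition 6.12]{Kl05} --- i.e.\ exactly the theorem of the base plus the identification of numerically trivial classes with $\Pic^{\tau}$ that you set out to reprove. For $X$ \emph{smooth} projective your argument is complete and standard: $\operatorname{NS}(X)$ is finitely generated, numerical and homological equivalence agree for divisors with $\QQ$-coefficients, so numerically trivial classes lie in $\operatorname{NS}(X)_{\mathrm{tors}}$ and $m=|\operatorname{NS}(X)_{\mathrm{tors}}|$ works. (Do add the reduction to $k=\bar{k}$ that the paper performs first: membership of $mL$ in the open subscheme $\Pic^0_{X/k}$ can be checked after base change, and both your Hodge-theoretic input and the Lefschetz principle live there.)

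The gap is in the singular case, which you flag as the main obstacle but do not actually close. You propose to descend along a resolution $\pi\colon X'\to X$ by ``enlarging $m$ by the finite order of $\ker(\operatorname{NS}(X)\to\operatorname{NS}(X'))$,'' but the finiteness of that kernel is precisely the non-formal content here, and the only evidence in sight is circular: a class $[L]$ lies in that kernel essentially if and only if $L$ is numerically trivial on $X$ (every curve in $X$ is finitely dominated by a curve in $X'$, and conversely the smooth case puts $m'\pi^*L$ in $\Pic^0_{X'/k}$ whenever $L\equiv 0$). So ``the kernel is finite'' is, up to the factor $m'$, a restatement of the lemma for $X$, and cannot be invoked to prove it. A non-circular argument must work at the level of group schemes: one shows that $(\pi^*)^{-1}\bigl(\Pic^0_{X'/k}\bigr)\subseteq\Pic_{X/k}$ is an open and closed subgroup scheme \emph{of finite type}, whose component group is exactly $\ker(\operatorname{NS}(X)\to\operatorname{NS}(X'))$ and is therefore finite. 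When $X$ is normal this is easy, since $\pi_*\Oo_{X'}=\Oo_X$ makes $\pi^*$ a monomorphism of Picard functors and the target $\Pic^0_{X'/k}$ is quasi-projective; for non-normal $X$ one must additionally know that $\ker\bigl(\Pic_{X/k}\to\Pic_{X^{\nu}/k}\bigr)$ is an affine group scheme of finite type (controlled by $H^1$ of a coherent sheaf on the conductor). Supplying this finite-type statement is what ``careful tracking of finite kernels'' has to mean; without it the descent step does not stand on its own, which is presumably why the paper simply cites Kleiman, whose proof of the theorem of the base for general proper schemes carries out exactly this reduction.
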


\begin{proof}
It suffices to show this after base change to $\bar{k}$, then this follows from \cite[Theorem 6.3, Propostion 6.12]{Kl05}. 
\end{proof}

The following proposition almost immediately follows from \cite[Proposition 5.20]{Kl05}, but we provide a full proof here for the readers' convenience. 

\begin{prop}\label{prop: Pic^0 of good family exists}
Let $f:X\to S$ be a flat projective contraction and any fiber is of semi-lc singularity type. Assume that $\Pic_{X/S}$ exists and is locally of finite type, then $\Pic_{X/S}$ has an open group subscheme $\Pic^0_{X/S}$ of finite type whose fibers are the $\Pic^0_{X_s/k_s}$. Furthermore, we have
\begin{enumerate}
    \item $\Pic^0_{X/S}$ is separated.
    \item If $S$ is reduced, then $\Pic^0_{X/S}$ is smooth over $S$, and (additionally)
    \item if $X_s$ is normal for any $s\in S$, then $\Pic^0_{X/S}$ is proper over $S$ and is closed in $\Pic_{X/S}$. 
\end{enumerate}
\end{prop}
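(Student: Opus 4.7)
The plan is to verify the hypotheses of \cite[Proposition 5.20]{Kl05} and then read off (1)--(3) from the fibrewise geometry of $\Pic^0_{X_s/k_s}$. The crucial input is that semi-log canonical singularities are seminormal, so for each $s\in S$ the identity component $\Pic^0_{X_s/k_s}$ is a semi-abelian variety (the extension of an abelian variety by an affine torus, as in the preliminaries). In particular every fibre of the candidate $\Pic^0_{X/S}$ is smooth, connected, and of dimension $h^1(X_s,\Oo_{X_s})$; by cohomology and base change this dimension is locally constant on $S$ in characteristic $0$, which feeds into Kleiman's openness criterion and produces $\Pic^0_{X/S}$ as an open subgroup scheme of $\Pic_{X/S}$ of finite type over $S$.

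For (1), separatedness reduces to the valuative criterion: given a DVR $R$ with fraction field $K$ mapping to $S$, two elements of $\Pic^0_{X/S}(R)$ agreeing over $\Spec K$ correspond to a line bundle on $X_R$ that is trivial on $X_K$ and lies fibrewise in $\Pic^0$, hence to an $R$-valued section of the semi-abelian group scheme $\Pic^0_{X_R/R}$ vanishing generically; such a section must be zero, since the identity section of a semi-abelian $R$-group scheme is closed. For (2), over a reduced base $S$ a group scheme of finite type whose geometric fibres are smooth of constant dimension is itself smooth, so given the semi-abelian description of the fibres this is essentially formal once openness has been established.

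For (3), if $X_s$ is normal and semi-lc then it is lc, so Lemma \ref{lem: Pic^0 over fields exist} gives that $\Pic^0_{X_s/k_s}$ is projective, i.e. an abelian variety. Every fibre of $\Pic^0_{X/S}\to S$ is therefore proper, and combined with separatedness and finite-typeness, properness follows via the valuative criterion applied to the extension of abelian-variety-valued sections (which holds since the whole family has good reduction fibrewise). Finally, the hypothesis that each $X_s$ is normal together with the contraction condition $f_*\Oo_X=\Oo_S$ makes the fibres geometrically integral in characteristic $0$, so Lemma \ref{lem: Pic exists for nice family} gives that $\Pic_{X/S}$ is itself separated; then an open immersion into a separated $S$-scheme that is proper over $S$ is necessarily a closed immersion, completing the proof.

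The main technical burden is the openness-of-$\Pic^0$ step: one must verify that the identity components glue into a single open subgroup scheme, which requires both a uniform dimension bound for the fibres and a local subgroup-scheme structure around the identity section. In the present setup both are handed to us by the semi-abelian description of the fibres and the constancy of $h^1$ in characteristic $0$, but any alternative treatment would have to directly build an open neighbourhood of the zero section in $\Pic_{X/S}$ whose fibres are precisely $\Pic^0_{X_s/k_s}$.
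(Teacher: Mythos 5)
Your overall skeleton matches the paper's: deduce openness of $\Pic^0_{X/S}$ from the smoothness of the fibres $\Pic^0_{X_s/k_s}$ in characteristic $0$ together with the local constancy of $\dim\Pic^0_{X_s/k_s}=h^1(X_s,\Oo_{X_s})$, then treat (1)--(3) separately, ending with ``open + proper into a separated scheme $\Rightarrow$ closed.'' But two of your steps are circular as written, and one input is misattributed. First, the constancy of $h^1(X_s,\Oo_{X_s})$ is \emph{not} a consequence of cohomology and base change (flatness only gives upper semicontinuity of each $h^i$ and constancy of $\chi$); it is precisely where the slc hypothesis enters, via the Du Bois property of slc singularities (the paper cites \cite[Corollary 2.64]{Kol23}) --- seminormality alone does not suffice. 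Relatedly, in (2) a finite-type group scheme over a reduced base with smooth fibres of constant dimension need not be flat (e.g.\ the union of the identity section with $\mu_n$ over one point of $\mathbb{A}^1$ inside $\mathbb{G}_m\times\mathbb{A}^1$ has constant fibre dimension and smooth fibres but jumping degree); the paper closes this by deducing flatness over $S_{\red}$ from universal openness via \cite[15.6.7]{EGA IV$_3$}, a step you should not omit.

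The more serious issues are in (1) and (3). For (1), your reduction to ``the identity section of a semi-abelian $R$-group scheme is closed'' is exactly equivalent to the separatedness you are trying to prove (for a group scheme, $\Delta$ is the pullback of the identity section along $(g,h)\mapsto gh^{-1}$), and the paper's definition of semi-abelian over a base does not build in separatedness; without an independent argument this is circular. The paper instead argues concretely: two $R$-sections agreeing generically differ by a line bundle $L$ on $X_R$ with $L\equiv_R 0$ and $L|_{X_\eta}\sim 0$, and such an $L$ is the pullback of a divisor on $\Spec R$ because both $\pm L$ are vertical and nef over $R$ (cf.\ Lemma \ref{lem: num extension is unique for flat fm}). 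For (3), ``the valuative criterion holds since the family has good reduction fibrewise'' assumes what is to be proved: extending a $K$-point of $\Pic^0$ across a DVR is the content of properness, and invoking the N\'eron property of abelian schemes presupposes that $\Pic^0_{X_R/R}$ is already proper. The paper avoids this by citing \cite[15.7.11]{EGA IV$_3$}: a separated, finite-type morphism with proper, geometrically connected fibres is proper. You need either that reference or Grothendieck's theorem on properness of $\Pic^\tau$ for geometrically normal fibres; the fibrewise statement alone does not yield it.
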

\begin{proof}
By the assumptions we know $\Pic_{X_s/k_s}$ equals to the base change $\Pic_{X/S}\times \Spec(k_s)$ and there is a section $\sigma_0$ (the identity) corresponding to the trivial divisor such that $\Pic^0_{X_s/k_s}\subseteq \Pic_{X_s/k_s}$ is the connected component containing $\sigma_0(s)$. Let $\Pic^0_{X/S}$ denotes the union of $\Pic^0_{X_s/k_s}$ (as set) in $\Pic_{X/S}$.

Since all the fibers are potentially slc, we have that $h^i(X_s,\Oo_{X_s})$ is locally constant on $S$ by \cite[Corollary 2.64]{Kol23}. In particular, $\dim\Pic^0_{X_s/k_s}=h^1(X_s,\Oo_{X_s})$ is locally constant. Since we are in char 0, $\Pic^0(X_s/k_s)$ is smooth (geometrically reduced and irreducible), hence by \cite[15.6.3]{EGA IV$_3$} $\Pic_{X/S}\to S$ is universal open at any points of $\Pic^0_{X/S}$, then by \cite[15.6.4]{EGA IV$_3$} $\Pic^0_{X/S}$ is an open subset of $\Pic_{X/S}$.

For (1), one just need to consider the case when $S$ is a DVR. Let $L$ be a numerical trivial Cartier divisor such that $L|_{X_\eta}\sim 0$, then one should show that $L\sim_S 0$. 

We may assume that $L$ is effective and supported on the closed fiber, then the nefness ensures that it must be the pullback of some multiple of the closed point (as a Cartier divisor). 

For (2), since $\Pic^0_{X/S}\to S$ is universal open, then $\Pic^0_{X/S}\to S$ is flat after base change to $S_{red}$ by \cite[15.6.7]{EGA IV$_3$}, hence smooth since the fibers are smooth.

For (3), $\Pic^0_{X_s/k_s}$ are proper by Lemma \ref{lem: Pic^0 over fields exist} since $X_s$ are normal. Now that $\Pic^0_{X/S}\to S$ is separated and $\Pic^0(X_s/k_s)$ is geometrically connected, $\Pic^0_{X/S}$ is proper by \cite[15.7.11]{EGA IV$_3$}. Then $\Pic^0_{X/S}$ is closed in $\Pic_{X/S}$ since $\Pic_{X/S}$ is separated.
\end{proof}

\begin{lem}\label{lem: section in proper Pic0 extends}
Assume $S$ has klt type singularities and $\Pic^0_{X/S}$ is proper over $S$, then any generic section $\sigma_\eta:\Spec(k_\eta)\to \Pic_{X_\eta/k_\eta}$ uniquely extends to a section $\sigma: S\to \Pic^0_{X/S}$, i.e. $\sigma|_\eta=\sigma_\eta$.   
\end{lem}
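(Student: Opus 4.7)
The plan is to extend the section in two stages, first across codimension one by properness, then across codimension $\ge 2$ using the klt hypothesis.

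First, since $\Pic^0_{X/S}$ is proper over $S$ and $S$ is normal (klt implies normal), I would apply the valuative criterion of properness at each codimension-one point of $S$ (whose local ring is a DVR). This extends $\sigma_\eta$ to a section $\sigma^\circ\colon U\to \Pic^0_{X/S}$ on an open subset $U\subseteq S$ with $\codim_S(S\setminus U)\ge 2$. This partial extension is already unique: two sections agreeing on $\eta$ must agree on all of $U$ by the separatedness of $\Pic^0_{X/S}\to S$ recorded in Proposition \ref{prop: Pic^0 of good family exists}(1).

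To extend across the remaining codimension $\ge 2$ locus, I would first note that Proposition \ref{prop: Pic^0 of good family exists}(2) gives smoothness of $\Pic^0_{X/S}\to S$ (klt implies reduced), so each fiber $\Pic^0_{X_s/k_s}$ is a smooth proper connected commutative group scheme in characteristic zero, hence an abelian variety. The section $\sigma^\circ$ defines a rational map $\sigma\colon S\dashrightarrow \Pic^0_{X/S}$ over $S$, and after taking a log resolution $\mu\colon \tilde S\to S$ of the klt variety $S$ and resolving indeterminacies, I would obtain a morphism $\tilde\sigma\colon \tilde S\to \Pic^0_{X/S}$ over $S$.

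The key descent step is to show $\tilde\sigma$ factors through $\mu$. For each $s\in S$, the image $\tilde\sigma(\mu^{-1}(s))$ is contained in the abelian variety $\Pic^0_{X_s/k_s}$. Here I would invoke the result of Hacon--McKernan that every fiber of a log resolution of a klt variety is rationally chain connected, together with the classical fact that any morphism from a rationally chain connected proper variety to an abelian variety is constant (since abelian varieties contain no rational curves). Consequently $\tilde\sigma$ contracts every fiber of $\mu$ to a point, and by the rigidity lemma it descends to a morphism $\sigma\colon S\to \Pic^0_{X/S}$, which is a section because it restricts to one on the dense open $U$. Uniqueness of the final extension again follows from separatedness.

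I expect this descent step to be the only real difficulty, and it genuinely uses the klt hypothesis on $S$: without klt, exceptional fibers of a resolution can support non-constant morphisms to an abelian variety, so $\tilde\sigma$ need not descend and the extension can fail. Every other ingredient (valuative criterion, rigidity lemma, smoothness and properness of $\Pic^0_{X/S}$) is standard.
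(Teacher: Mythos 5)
Your proposal is correct and rests on exactly the same key input as the paper: the Hacon--McKernan theorem that fibers of a proper birational morphism onto a klt variety are rationally chain connected, combined with the fact that the proper fibers $\Pic^0_{X_s/k_s}$ are abelian varieties and hence contain no rational curves. The paper packages this more directly --- it takes the closure $S'$ of the generic section inside $\Pic^0_{X/S}$, applies \cite{HM07} to the proper birational map $S'\to S$ to conclude its fibers are finite, and invokes Zariski's main theorem --- whereas you pass through a resolution of $S$ and a rigidity/descent argument; both routes are valid and your codimension-one step via the valuative criterion is harmless but not actually needed.
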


\begin{proof}
Since $\Pic^0_{X/S}$ is proper over $S$, we can take the closure $S'$ of $\sigma_\eta$ in $\Pic^0_{X/S}$ and we get a proper birational morphism $\phi: S'\to S$. 

In order to get the section $\sigma$,  we must prove that $\phi:S'\to S$ is an isomorphism, and by Zariski's main theorem it suffices to show that $\phi$ is finite since $S$ is normal. Indeed, for any closed point $s\in S$, $\phi^{-1}(s)$ is rationally chain connected by \cite[Theorem 1.2]{HM07}, hence contains a rational curve if $\dim \phi^{-1}(s)\ge1$. However, this implies that $\Pic^0_{X_s/k_s}$ contains a rational curve, which is impossible since a smooth proper connected algebraic group over $\Cc$ must be a projective abelian variety.   

\end{proof}

We emphasize that a section in $\Pic_{X/S}$ does not necessarily correspond to an actual divisor on $X$ since the functor $\Pic_{(X/S)}$ might not be representable. In fact, we know $\Pic_{X/S}$ would represent $\Pic_{(X/S)\text{(\'etale)}}$ if the fibers are reduced (cf. Lemma \ref{lem: comparison lem}), as in this case we have local \'etale sections, but for a section in $\Pic^0_{(X/S)\text{(\'etale)}}(S)$, the induced 1-cocyle which takes values in $\Pic(U_{ij})$ for the \'etale cover $\bigsqcup_i U_i\to S$ may not be exact, which becomes the obstruction of gluing together to a line bundle on $S$. Therefore usually a generically finite base change is necessary, after which we do have a global section of $f$ and hence $\Pic_{(X/S)}$ is representable by $\Pic_{X/S}$.

\begin{cor}\label{cor: num div extends when Pic0 proper}
Let $f:X\to S$ be a projective locally stable family (cf. \cite{Kol23}) of normal varieties over a reduced scheme $S$. Let $L_\eta$ be a numerically trivial line bundle on $X_\eta$, where $\eta\in S$ is the generic point. 
\begin{enumerate}
    \item Then possibly after a generically finite base change, there exists a $\Qq$-Cartier $\Qq$-divisor $L'$ on $X$ such that $L\equiv_S 0$ and $L_\eta= L'|_{X_\eta}$. 
    \item If $L|_{X_\eta}=L_\eta$ for some line bundle on $X$, then $L\equiv_S0$. (No base change needed.)
\end{enumerate}

\end{cor}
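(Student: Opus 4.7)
The plan is to apply the Picard scheme machinery developed in Section 3. For (1), I would first perform a generically finite base change $\phi: S' \to S$ aiming at three things at once: ensure the generic fibre of the pulled-back family $f': X' \to S'$ is geometrically integral (and more generally that the generic fibres over each irreducible component of $S'$ are geometrically integral, so that Lemma \ref{lem: Pic exists for nice family} applies), arrange $f'$ to admit a Zariski-local section so that $\Pic_{X'/S'}$ actually represents the functor $\Pic_{(X'/S')}$ via Lemma \ref{lem: comparison lem}, and pass to a resolution so that $S'$ is smooth and in particular of klt type. The base-changed family remains projective locally stable with normal fibres, so Lemma \ref{lem: Pic exists for nice family} yields existence of $\Pic_{X'/S'}$, and Proposition \ref{prop: Pic^0 of good family exists} then produces $\Pic^0_{X'/S'}$ as an open subgroup scheme which is smooth and proper over the reduced base $S'$, precisely because the fibres are normal.

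Next, by Lemma \ref{lem: num 0 multiple to Pic 0} there is a positive integer $m$ with $mL_{\eta'} \in \Pic^0(X'_{\eta'})$, giving a generic section $\sigma_{\eta'}$ of $\Pic^0_{X'/S'}$. Since $\Pic^0_{X'/S'}$ is proper over the klt base $S'$, Lemma \ref{lem: section in proper Pic0 extends} extends $\sigma_{\eta'}$ uniquely to a global section $\sigma: S' \to \Pic^0_{X'/S'}$. A priori $\sigma$ lies in $\Pic^0_{(X'/S')\text{(\'etale)}}(S')$, but the Zariski-local section of $f'$ arranged in the previous paragraph promotes $\sigma$ via Lemma \ref{lem: comparison lem} to a genuine line bundle $\tilde L$ on $X'$ with $\tilde L|_{X'_{\eta'}} = mL_{\eta'}$. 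Setting $L' := \tfrac{1}{m}\tilde L$ yields the required $\Qq$-Cartier $\Qq$-divisor; fibrewise numerical triviality follows because $L'|_{X'_s}$ lies in $\Pic^0(X'_s) \otimes \Qq$ for every $s \in S'$.

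For (2), the preliminary reductions of (1) can be repeated; since numerical triviality over $S$ is both preserved and reflected under a generically finite base change (by Lemma \ref{lem: num trivial div stable under finite morph}), it suffices to verify the conclusion after pullback. A line bundle $L$ on $X$ satisfying $L|_{X_\eta} = L_\eta$ directly determines a section $\sigma_L: S' \to \Pic_{X'/S'}$. By Lemma \ref{lem: num 0 multiple to Pic 0} we have $m\sigma_L(\eta') \in \Pic^0(X'_{\eta'})$, and $\Pic^0_{X'/S'}$ is simultaneously open and closed in $\Pic_{X'/S'}$ (closedness comes from properness, which uses fibre normality). Restricting to each irreducible component of $S'$, the image of $m\sigma_L$ is a connected subset meeting $\Pic^0_{X'/S'}$, hence lies entirely inside it. Therefore $mL|_{X'_s} \in \Pic^0(X'_s)$, so $L|_{X_s} \equiv 0$ for every $s \in S$, and no base change is needed in the final statement.

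The main obstacle I expect is the Picard-scheme-versus-Picard-functor bookkeeping: the section extracted from Lemma \ref{lem: section in proper Pic0 extends} is naturally only a section of the étale Picard functor, and descending it to an honest line bundle on $X'$ is what forces a nontrivial generically finite base change on $S$ (rather than merely a resolution), so that $f'$ acquires a Zariski-local section and the comparison maps of Lemma \ref{lem: comparison lem} become isomorphisms.
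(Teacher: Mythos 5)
Your proposal is correct and follows essentially the same route as the paper: a generically finite base change to acquire a section of $f$ and a smooth base, then Lemma \ref{lem: Pic exists for nice family}, Proposition \ref{prop: Pic^0 of good family exists}, Lemma \ref{lem: num 0 multiple to Pic 0}, and Lemma \ref{lem: section in proper Pic0 extends}, with Lemma \ref{lem: comparison lem} converting the extended section into an honest line bundle. The only (harmless) deviation is in part (2), where you conclude via connectedness of the section's image meeting the open-and-closed subscheme $\Pic^0_{X/S}$, while the paper invokes separatedness and uniqueness of the extension of $\sigma_{mL_\eta}$; both arguments are valid.
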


\begin{proof}
By taking the Stein factorization we may assume that $f_*\Oo_X=\Oo_S$.
The assumptions imply that $X$ is generically normal and we can select a closed integral subvariety $S''\subset X$ (eg. obtained by taking general hyperplane intersections) such that $S'\to S$ is generically finite and projective. Then let $S'\to S''$ be a resolution of singularity, and by base change to $S'$, we may assume that $f$ have a section and $S$ is smooth. 

Since $f$ is locally stable with normal fibers, Lemma \ref{lem: Pic exists for nice family} implies that $\Pic_{X/S}$ exists and is of finite type, and by Lemma \ref{lem: comparison lem} $\Pic_{X/S}$ actually represent the functor $\Pic_{(X/S)}$. Since the fibers of a locally stable family are slc type, by Proposition \ref{prop: Pic^0 of good family exists} we know $\Pic^0_{X/S}$ is an open subscheme of $\Pic_{X/S}$ and is proper over $S$.

By Lemma \ref{lem: num 0 multiple to Pic 0} we may assume that $L_\eta\in \Pic^0_{X_\eta/k_\eta}$. Now $L_\eta$ corresponds to a section 
\[
\sigma_\eta: \Spec(k_\eta)\to \Pic^0_{X_\eta/k_\eta}=(\Pic^0_{X/S})_\eta,
\]
and by Lemma \ref{lem: section in proper Pic0 extends} $\sigma_\eta$ extends to a section $\sigma$ in $\Pic^0_{X/S}(S)$, which by the representability gives a line bundle $L'$ on $X$ such that $L'|_{X_\eta}\sim L_\eta$, then after rechoose $L'$ we can ask $L'|_{X_\eta}=L_\eta$, and of course $L'\equiv_S 0$ because $L'|_{X_s}$ corresponds to the point $\sigma(s)\in \Pic^0_{X_s/k_s}$ for any $s\in S$, which is algebraically equivalent to the trivial divisor by definition, hence also numerically trivial. 

For (2), line bundles $L_\eta$ and $L$ give sections
$$
\sigma_{L_\eta}: \Spec (k_\eta)\to \Pic_{X_\eta/k_\eta}, ~\sigma_L: S\to \Pic_{X/S}
$$ such that $(\sigma_L)_\eta=\sigma_{L_\eta}$. By Lemma \ref{lem: num 0 multiple to Pic 0} there is an $m\in \Nn^*$ such that $mL_\eta\in \Pic^0(X_\eta)$, hence we have the corresponding section $\sigma_{mL_\eta}$ and $\sigma_{mL_\eta}(\eta)\in\Pic^0_{X_\eta/k_\eta}$. Notice that $\Pic_{X/S}$ is separated since all the fibers are geometrically integral, and $\Pic^0_{X/S}$ is closed in $\Pic^0_{X/S}$ by Proposition \ref{prop: Pic^0 of good family exists} (3), the section $\sigma_{mL}$ must be the unique extension of $\sigma_{mL_\eta}$ in $\Pic^0_{X/S}$. And the $L\equiv_S0$ follows since it can be checked over the closed points.

\end{proof}

In order to study $\Pic^0_{X/S}$ more explicitly, we need to know $\Pic^0_{X_s/k_s}$ for each fibers. We focus on the case when $X_s$ is reduced and simple normal crossing, since this might be the best we can hope for after a modification of a family.

\begin{prop}\label{prop: pic of snc is semi-abelian}
Let $W=\bigcup_{i\in I}W_i$ be a projective simple normal crossing variety over $\Cc$, then $\Pic^0_{W/\Cc}$ is semi-abelian. Moreover, the torus part is given by the character of 
$$H_1(\mathcal{D}(W),\Zz)/H_1(\mathcal{D}(W),\Zz)_{tor},$$ 
or equivalently, the identity component of $H^1(\mathcal{D}(W),\mathbb{G}_m)$ (as an algebraic group).
\end{prop}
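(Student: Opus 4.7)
The plan is to realize $\Pic^0_{W/\Cc}$ as an extension
\[
0\to T\to \Pic^0_{W/\Cc}\to A\to 0
\]
with $T$ a torus determined by $H_1(\mathcal{D}(W),\Zz)$ and $A$ an abelian variety built from the $\Pic^0_{W_i/\Cc}$. The computation is a standard semi-simplicial / \v Cech argument applied to the sheaf of units.

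\emph{Step 1: Resolution of units.} Let $\pi_p:\tilde W^{[p]}\to W$ denote the disjoint union of irreducible components of the $(p+1)$-fold intersections of the $W_i$'s, equipped with the natural map to $W$. I would first verify that the complex of sheaves on $W$
\[
0\to\mathcal{O}_W^*\to (\pi_0)_*\mathcal{O}^*_{\tilde W^{[0]}}\to (\pi_1)_*\mathcal{O}^*_{\tilde W^{[1]}}\to\cdots
\]
is exact. This is a local computation: at a point with SNC local model $\Cc[[z_1,\ldots,z_{n+1}]]/(z_1\cdots z_k)$, a compatible system of units on the strata glues uniquely to a unit on $W$.

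\emph{Step 2: Spectral sequence.} Feeding this resolution into hypercohomology gives
\[
E_1^{p,q}=H^q(\tilde W^{[p]},\mathcal{O}^*)\Rightarrow H^{p+q}(W,\mathcal{O}_W^*).
\]
Since each irreducible component of each $\tilde W^{[p]}$ is smooth, projective and connected, $H^0(\tilde W^{[p]},\mathcal{O}^*)=\mathbb{G}_m^{N_p}$, where $N_p$ counts the $p$-simplices of $\mathcal{D}(W)$, and the $d_1$-differentials on the row $q=0$ recover the simplicial coboundaries of the $\Delta$-complex. Hence $E_2^{p,0}=H^p(\mathcal{D}(W),\mathbb{G}_m)$, and the five-term exact sequence reads
\[
0\to H^1(\mathcal{D}(W),\mathbb{G}_m)\to \Pic(W)\to \prod_i \Pic(W_i)\to H^2(\mathcal{D}(W),\mathbb{G}_m).
\]
Everything here is functorial in the base, so the analogous exact sequence holds for the relative Picard schemes (representability of each $\Pic_{W_i/\Cc}$ is Lemma~\ref{lem: Pic^0 over fields exist}).

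\emph{Step 3: Identity components.} Passing to identity components produces an exact sequence of commutative group schemes
\[
0\to T\to \Pic^0_{W/\Cc}\to A\to 0,
\]
where $A$ is a closed subgroup of the abelian variety $\prod_i \Pic^0_{W_i/\Cc}$, hence itself an abelian variety, and $T$ is the identity component of
\[
H^1(\mathcal{D}(W),\mathbb{G}_m)\cong \Hom(H_1(\mathcal{D}(W),\Zz),\mathbb{G}_m).
\]
Since $H_1(\mathcal{D}(W),\Zz)$ is finitely generated, $T$ is precisely the torus with character group $H_1(\mathcal{D}(W),\Zz)/H_1(\mathcal{D}(W),\Zz)_{\mathrm{tor}}$, which exhibits $\Pic^0_{W/\Cc}$ as semi-abelian with the claimed torus part.

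The main obstacle is the bookkeeping in Steps 1 and 2: the local exactness of the \v Cech-type complex of units at the deeper strata, together with the sign-accurate identification of its $d_1$-differentials with the coboundaries of the $\Delta$-complex structure on $\mathcal{D}(W)$. A lesser subtlety is that the induced map $\Pic^0_{W/\Cc}\to\prod_i \Pic^0_{W_i/\Cc}$ need not be surjective — the image is cut out by a $d_2$-type obstruction in $H^2(\mathcal{D}(W),\mathbb{G}_m)$ — but the image is a closed subgroup of an abelian variety, and so still an abelian variety, which suffices for the semi-abelian conclusion.
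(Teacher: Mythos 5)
Your proposal is correct and arrives at the same structural decomposition as the paper---the short exact sequence $\{e\}\to\Ker\mu^*\to\Pic^0_{W/\Cc}\to\Ima\mu^*\to\{e\}$ for pullback along the normalization $\mu:W^n=\bigsqcup_iW_i\to W$, with $\Ima\mu^*$ an abelian variety because each $W_i$ is smooth and projective---but it computes the kernel by a genuinely different method. The paper identifies $\{[L]:\mu^*L\simeq\mathcal{O}_{W^n}\}$ with $H^1(\mathcal{D}(W),\Cc^*)$ by hand: it chooses trivializations $\psi_i$ of $L|_{W_i}$, reads off the transition units on the double loci as a $1$-cochain, and checks directly that well-definedness of the gluing is closedness and that triviality is exactness, the sufficiency of closedness being verified stratum by stratum using contractibility of the local simplices. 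You instead package the whole computation into the semi-simplicial resolution $0\to\mathcal{O}_W^*\to(\pi_0)_*\mathcal{O}^*_{\tilde W^{[0]}}\to\cdots$ and its hypercohomology spectral sequence, so that the identification of the kernel with $E_2^{1,0}=H^1(\mathcal{D}(W),\mathbb{G}_m)$ and the $H^2(\mathcal{D}(W),\mathbb{G}_m)$-obstruction to descending line bundles from the components both come out mechanically; the price is the local exactness of the unit complex at deep strata (which reduces to the additive Mayer--Vietoris statement plus contractibility of the local dual complex, essentially the same input the paper uses in its step (ii)), and the reward is a cleaner, more functorial treatment of the higher strata. Two small points to tidy: the five-term sequence has $E_2^{0,1}$, not all of $\prod_i\Pic(W_i)$, in the third spot (harmless, since you only use exactness at $\Pic(W)$, and you do acknowledge the $d_2$-obstruction later); and in Step~3 the kernel of $\Pic^0_{W/\Cc}\to\Ima\mu^*$ is a priori only a finite extension of the torus $T=\bigl(H^1(\mathcal{D}(W),\mathbb{G}_m)\bigr)^0$, so one should quotient by $T$ itself and observe that $\Pic^0_{W/\Cc}/T$ is connected, smooth and proper (it maps onto an abelian variety with finite kernel), hence abelian---the paper elides the same point.
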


\begin{proof}
Let $\mu:W^n=\bigsqcup_iW_i\to W$ be the normalization, then the pullbacks of line bundles induce a morphism between algebraic groups:
\[\mu^*:\Pic^0_{W/\Cc}\to\Pic^0_{W^n/\Cc}=\prod\Pic^0_{W_i/\Cc},\]
and there exists algebraic subgroups $\Ker\mu^*\subset \Pic^0_{W/\Cc},~\Ima \mu^*\subset\Pic^0_{W^n/\Cc}$ such that 
\[\{e\}\to \Ker\mu^*\to\Pic^0_{W/\Cc}\to\Ima \mu^*\to\{e\} \]
is exact. The fact that $\Ima\mu^*$ is abelian is clear since $\Pic^0_{W^n/\Cc}$ is an abelian variety, so we only need to show that $\Ker\mu^*$ is a torus. We can just look at the set of closed points $\Ker\mu^*(\Cc)$ since $\Cc$ is algebraically closed.

Let $[L]\in \Ker\mu^*(\Cc)$ be a closed point, given by a line bundle $L$ on $W$, then by definition $L$ is trivial on each $W_i$. We claim that 
\begin{align}\label{eq: Pic is H^1}
S:=\{[L]\in\Pic(W)~|~\mu^*L\simeq\Oo_{W^n}\}\cong H^1(\mathcal{D}(W),\Cc^*).    
\end{align}
Note that $\Ker\mu^*(\Cc)\subset S\subset\Pic_{W/\Cc}(\Cc)$ as groups, it is easy to see that $\Ker\mu^*$ is the identity component of $H^1(\mathcal{D}(W),\mathbb{G}_m)$
as the largest torus. Then by the universal coefficient theorem, we have 
\[
H^1(\mathcal{D}(W),\mathbb{G}_m)\cong\Hom(H_1(\mathcal{D}(W),\Zz),\mathbb{G}_m),
\]
hence the statement follows.

It remains to prove (\ref{eq: Pic is H^1}): Considering $L$ as a total space gluing together from $\mu^*L$, the gluing information is totally determined by how $L|_{W_i}$ and $L|_{W_j}$ glue together over any irreducible $Z_{ij}\subset W_i\cap W_j,i<j$. We first choose isomorphisms $\psi_i:L|_{W_i}\simeq\Oo_{W_i}$, then $\psi_i|_{Z_{ij}}\circ(\psi_j|_{Z_{ij}})^{-1}$ gives an element in $\Aut_{Z_{ij}}(\Oo_{Z_{ij}})\simeq\Cc^*$. Therefore $L$ gives rise to an 1-cochain $$
\psi\in\Hom(\Delta_1(\mathcal{D}(W)),\Cc^*)$$ (depending on the choices) and we are done if we can show the following
\begin{itemize}
    \item[(i)] The automorphism condition (the cochains given by those $L\simeq\Oo_W$) is equivalent to the cochain being exact.
    \item[(ii)] The compatibility condition (the cochains given by some $L$ such that $[L]\in S$ ) is equivalent to the cochain being closed.
\end{itemize}

First notice that if we only change $\psi_i$ to $\psi'_i$, and fix other $\psi_j$, then $\psi_i'\circ\psi_i^{-1}$ gives an element $\alpha_i\in\Cc^*$ and we can check that the induced cochains differ by a $\partial(\beta_i)$, where $\beta_i$ is the 0-cochain such that
\[
<\beta_i,\Delta^0_{W_j}>=(\alpha_i)^{\delta_{ij}}
\]
(notice that $<\partial(\beta_i),\Delta^1_{Z_{lj}}>=<\beta_i,\Delta^0_{W_l}-\Delta^0_{W_j}>=\alpha_i^{\delta_{il}-\delta_{ij}},~ l<j$.)

Now if $\psi:L\simeq\Oo_W$, which induces isomorphisms $\{\psi_i\}_{i\in I}$ by restrictions, then the induced cochain $\phi$ is the trivial element, hence if we have different choices of $\{\psi'_i\}_{i\in I}$, which induces a cochain $\phi'$, then we see $$
\phi'=\frac{\phi'}{\phi}=\prod_i\partial(\beta_i)
$$  
where $\{\beta_i\}_{i\in I}$ is constructed as above by considering $\psi_i'\circ\psi_i^{-1}$ for all $i\in I$. In particular, $\phi'$ is exact. We have proved (i).

For (ii),  let $\phi$ be a cochain in $\Hom(\Delta_1(\mathcal{D}(W)),\Cc^*)$ and $<\phi,\Delta^1_{Z_{ij}}>=\alpha_{ij}$. Then $\phi$ is induced by a line bundle $L$ such that $[L]\in S$ if and only if the space $W^n\times\Cc$ glues to the total space $L$ via the relations generated by
\begin{align}\label{eq: relations on Z_ij}
(x_i,\lambda)\sim(x_j,\alpha_{ij}\lambda),~x_i\in W_i,x_j\in W_j,~\mu(x_i)=\mu(x_j),~i<j  
\end{align}
and is further equivalent to for any $i\in I$,
\[(x_i,\lambda)\sim(x_i,\lambda'),~x_i\in W_i\iff \lambda=\lambda'.\]
We see that it is necessary to require
\[
1=\alpha_{ij}\alpha_{jk}\alpha_{ik}^{-1}=<\phi,\Delta^1_{W_i}-\Delta^1_{W_j}+\Delta^1_{W_k}>=<\partial(\phi),\Delta^2_{Z_{ijk}}>
\]
for any irreducible $Z_{ijk}\subset W_i\cap W_j\cap W_k,~i<j<k$. Therefore $\phi$ needs to be closed 1-cochains. We claim that this is also sufficient: 

We can check for each stratum in $W$, let $Z_J\subset\bigcap_{i\in J}W_i$ be any irreducible components and suppose 
$$
x\in Z_J, x\notin W_i ,\forall i\notin J,
$$ 
then $\phi|_{\Delta^{|J|-1}_{Z_J}}$ (by restriction to the subcomplex $\Delta^{|J|-1}_{Z_J}$) is also closed, hence exact since $\Delta^{|J|-1}_{Z_J}$ is simplicial. If we only consider locally around $x$, then the associated dual complex is just $\Delta^{|J|-1}_{Z_J}$ and by (i) there is a line bundle such that its induced co-chain is exactly $\phi|_{\Delta^{|J|-1}_{Z_J}}$. Therefore $W^n\times\Cc$ glues to a line bundle via the relations (\ref{eq: relations on Z_ij}) at least
locally around $x$, and hence glues globally since $x\in W$ can be random.

\end{proof}

\begin{rem}
We make a few comments before we moving on:
\begin{enumerate}
    \item Assuming we have a family $f:X\to S$ such that $X_s$ is simple normal crossing for all $s\in S$ and the dual complex $\mathcal{D}(X_s)$ is simply connected, Proposition \ref{prop: pic of snc is semi-abelian} implies that the $\Pic^0_{X/S}$ (if exists) should be proper over $S$. Even though $\Pic_{X/S}$ might not exists because of Condition (2) of Lemma \ref{lem: Pic exists for nice family}, $\Pic_{X/S}$ is an algebraic space of finite presentation and the author believes that the similar argument in Proposition \ref{prop: Pic^0 of good family exists} should imply that $\Pic^0_{X/S}$ is at least a proper algebraic space over $S$, whose fibers are abelian varieties. Then one can show by the same argument in Corollary \ref{cor: num div extends when Pic0 proper} that any numerically trivial divisor extends.  
    \item Proposition \ref{prop: pic of snc is semi-abelian} should holds in a much more general setting, at least when $W$ is semi-log canonical. Instead of only considering the combinatorics of $W$ itself, one needs to take all the normalization maps (of strata) into considerations. When $W$ is simple normal crossing, it is enough to see all the maps from the dual complex, but even for normal crossing varieties, self intersections make things subtle. The maps would impose more conditions on the gluing data but one should still be able to show that the $\Pic^0(W)$ is semi-abelian.
\end{enumerate}

\end{rem}

\section{N\'eron models}

Coming back to our question, we see from above that the remained obstruction to extending a numerically trivial divisor is the failure of properness of $\Pic^0_{X_s/k_s}$ on special fibers. One can easily see that the naive idea to directly extend sections failed in the following examples:

\begin{ex}\label{ex: degeneration to Ga}
Let $E$ be an elliptic curve and $C$ a smooth curve, $\pi:E\times C \to C$, blow up a point $p$ over $0\in C$ and then contract the elliptic fiber $E_0$ to get $f:X\to C$, then let $L=C_2-C_1$, where $C_1$ is a section passing through $p$ and $C_2$ is another general section of $\pi$, then the birational transform $L_X$ of $L$ on $X$ is a non-$\Qq$ Cartier divisor.

On the Picard schemes level, $f$ is a flat degeneration of elliptic curves to a rational curve with a cusp. By abusing notations we still use $C_i$ as their birational transforms on $X$, then with the help of $C_1$, we have an canonical isomorphism
\[
(\Pic^0_{X/S},0_S)\simeq (X-\{q\},C_1), ~q=\Sing(X),
\]
and $L$ corresponds to the section $C_2$, therefore $L$ can extend if and only if $C_2$ does not pass through $q\in X$. Moreover, $mL$ can extend if and only if $m(C_2-C_1)|_{E_0}\sim0$.
\end{ex}

Situation gets a little bit better for a good degeneration of family: 

\begin{ex}\label{ex: surface example on M_1,2}
Let $f:X\to S$ be a family of stable curves of genus one with two marked points:
\begin{enumerate}
    \item $S$ is a smooth curve and $f$ is smooth over $U:=S-\{0\}$, i.e. $X_s$ is an elliptic curve for any $s\in S-\{0\}$.
    \item $X_0=f^{-1}(0)=C_1\cup C_2$, where $C_1,C_2\simeq\mathbb{P}^1$ intersecting transversally at two points,
    \item sections $S_i,i=1,2$ intersects the curve $C_i$ and at the smooth locus of $X_0$.
\end{enumerate}
More concretely, there are cases that $X$ is smooth, then one can check that we must have 
$$
C_1\cdot C_1=C_2\cdot C_2=-2=-C_1\cdot C_2
$$

Let $g:X\to Y$ be the contraction of $C_2$ with $g(C_2)=p\in Y$, 
Then the section $S_{1,Y}:=g_*S_1$ induces a canonical isomorphism 
\[
\phi: (\Pic^0_{Y/S},0_S)\simeq (Y-\{p\},S_{1,Y})
\]
and we also have isomorphism $g^*:\Pic^0_{Y/S}\simeq\Pic^0_{X/S}$. Notice that $S_{2,Y}=g_*S_2$ is a section of $Y\to S$ passing through $p$, so $S_{2,Y}|_{Y_U}$, viewed as a section $\sigma_U$ of $\Pic^0_{X_U/U}\to U$, could not extend to a global section of $\Pic^0_{X/S}\to S$. Equivalently, the divisor $(S_2-S_1)|_{X_U}$ could not extend to a Cartier divisor that is numerically trivial over $S$.

However, as in the Example \ref{ex: surface ex}, we see $2(S_2-S_1)+C_2\equiv_S 0$. That means $2\cdot\sigma_U\in\Pic^0_{X_U/U}(U)$ extends to a global section in $\Pic^0_{X/S}(S)$, even though $\sigma_U$ can not. 

We have the multiplication morphism of group schemes:
\[
\alpha:\Pic^0_{X/S}\stackrel{[2]}{\longrightarrow}\Pic^0_{X/S},
\]
which gives a rational map $\hat\alpha: Y\dashrightarrow Y$ and one can check that we can resolve it by the blow up $g:X\to Y\ni p$:
\[
\hat\alpha\circ g=\bar\alpha: X\to Y,
\]
which is a finite morphism of degree 4.
let $\{q_1,q_2\}=C_1\cap C_2$, then there is an $S$-group scheme structure on $X-\{q_1,q_2\}$, whose central fiber $X_0$ is two copy of $\Cc^*$ and we have:
$$
\beta: \Pic^0_{X/S}\hookrightarrow G:=X-\{q_1,q_2\}, ~\beta\circ\bar\alpha=\alpha
$$
Now $S_2$ can be viewed as a section of $G$, and maps to $\bar\alpha(S_2)$ in $\Pic^0_{X/S}$, which is the section corresponding to the Cartier divisor $2(S_2-S_1)+C_2$.

\end{ex}

\vspace{.5em}

We are wondering if the above phenomenon holds in general, i.e, a rational/generic section of $\Pic^0_{X/S}$ could extend to a global section after a multiplication. As we see above, this would follows if we show that there is a larger group scheme $G$ of finite type, such that 
\begin{enumerate}
    \item there is an open immersion $\Pic^0_{X/S}\hookrightarrow G$,
    \item $G$ satisfies the extension property: any generic section extends to a global section.
\end{enumerate}

The group schemes $\Pic^0_{X/S}$ can be regarded as a degeneration of abelian varieties, and one can see from Example \ref{ex: degeneration to Ga} that the degeneration should be ``good" for the extension argument to work. Fortunately for us, by Proposition \ref{prop: pic of snc is semi-abelian}, we have a nice correspondence between
\begin{align*}
\text{semi-stable reduction of } f:X\to S 
\end{align*}
and 
\[
\text{semi-abelian reduction of } \Pic^0_{X/S}.
\]

When $S$ is a DVR or a Dedekind scheme, there is a very nice theory about the degeneration of abelian varieties, and the group scheme $G$ has a name, called the ``N\'eron model" of $(\Pic^0_{X/S})_\eta$.

Actually \cite{Kol25} implies that the model $G$ could be construct explicitly in the following way (just as in Example \ref{ex: surface example on M_1,2}): 

One take a random compactification $Y\to S$ of $\Pic^0_{X/S}\to S$, and take a resolution of singularity $Y'\to Y\to S$ such that 
$$
Y_s \text{ is simple normal crossing }\forall s\in S,~Y_\eta=(\Pic^0_{X/S})_\eta
$$
Then we can run a $K_{Y'}$-MMP over $S$, which will terminate with a minimal model
$\mu:Y_{min}\to S$ with terminal singularities, and it turns out that the $\mu$-smooth locus
\[
G:=\{y\in Y_{min}~|~\mu \textit{ is smooth at } y\}
\]
is exactly the ``N\'eron model" of $(\Pic^0_{X/S})_\eta$. However, we will not go that far since what we need here is the existence, and then the rest would follow from Proposition \ref{prop: semi-abelian open immerse to Neron model}.

\vspace{1em}

Now we start to recall some standard facts about N\'eron models from \cite{BLR90}.

\begin{defn}[{\cite[1.2.1, 1.2.6]{BLR90}}]
Let $S$ be a Dedekind scheme such that $\eta\in S$ is the generic point. Let $\tilde{X}_\eta$ be a smooth and separated $k(\eta)$-scheme of finite type. 

An $S$-scheme $X$ such that $X_\eta\cong\tilde{X}_\eta$ is defined to be a N\'eron model of $X_\eta$ if $X$ is a smooth separated $S$-scheme of finite type, which has the following universal property, called N\'eron mapping property:

{\it For each smooth $S$-scheme $Y$ and each $k(\eta)$-morphism $u_\eta: Y_\eta\to X_\eta$,
there is a unique $S$-morphism $u: Y\to X$ extending $u_\eta$.}

\noindent In particular, taking $Y=S$, this means that every generic section extends to a global section.

If $\tilde{X}_\eta$ is a group scheme over $k(\eta)$, then the N\'eron model $X$ is necessarily an $S$-group scheme which extends the group scheme structure on $X_\eta\cong \tilde{X}_\eta$. As a typical example, one can easily check that (cf. \cite[1.2.8]{BLR90}) an abelian scheme over $S$ is the N\'eron model of its generic fiber.
 
\end{defn}

A highly non-trivial fact is that the N\'eron model always exists for abelian varieties:

\begin{thm}[{\cite[1.4.3]{BLR90}}]
Let $S$ be a connected Dedekind scheme with field of fractions $k_{\eta}$ and let
$A_\eta$ be an abelian variety over $k(\eta)$. Then $A_{k(\eta)}$ admits a global N\'eron model $A$ over $S$.    
\end{thm}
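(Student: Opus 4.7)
The plan is to follow N\'eron's classical construction, as carried out in \cite{BLR90}. First I would reduce to the case where $S = \Spec R$ for $R$ a strictly henselian DVR: the locus where $A_\eta$ has good reduction extends to an abelian scheme (which is automatically its own N\'eron model by the valuative criterion applied to smooth proper morphisms with connected geometric fibers), so it suffices to construct the N\'eron model at each of the finitely many closed points of bad reduction. After passing to the strict henselization, faithfully flat descent will glue the resulting local models to a global one, since the N\'eron mapping property characterizes the model uniquely up to unique isomorphism.

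Next I would choose a proper flat model $\bar X \to S$ of $A_\eta$, which exists because $A_\eta$ is projective (take the scheme-theoretic closure in some $\Pp^N_S$). The central technical step is N\'eron's smoothening process: by performing a finite sequence of blow-ups supported in the non-smooth locus of the special fiber, one produces a model $X'$ whose $R$-points surject onto $A_\eta(k(\eta))$. This is the construction of a weak N\'eron model, and termination is ensured by an integer-valued measure of non-smoothness (N\'eron's $\delta$-invariant) that strictly decreases under a correctly chosen admissible blow-up. This smoothening step is the main obstacle: it requires a delicate local analysis of how sections of $A_\eta$ specialize through a singular point, and most of \cite[Chapters~2--3]{BLR90} is devoted to it.

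Having obtained a weak N\'eron model, the second main step is to upgrade it to an honest smooth $S$-group scheme. The group law and inversion on $A_\eta$ define rational maps on $X'$ which, after further smoothening applied to the closure of their graphs, restrict to a birational $S$-group law on the smooth locus $X'^{\mathrm{sm}}$; the group axioms hold generically and therefore everywhere they make sense. Weil's extension theorem for birational group laws (\cite[Chapter~5]{BLR90}) then produces a unique smooth separated $S$-group scheme $\mathcal N$ extending this law, with generic fiber $A_\eta$. Finally, the full N\'eron mapping property follows formally: given a smooth $S$-scheme $Y$ and $u_\eta: Y_\eta \to A_\eta$, one takes the closure of the graph of $u_\eta$ in $Y \times_S \mathcal N$; smoothness of $Y$ combined with the weak N\'eron property applied to each strict henselization of $Y$ at a codimension-one point shows that this closure is the graph of a morphism $Y \to \mathcal N$ extending $u_\eta$, and uniqueness is automatic by separatedness of $\mathcal N$.
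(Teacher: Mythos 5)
This theorem is not proved in the paper at all --- it is quoted from \cite[1.4/3]{BLR90} --- so the only meaningful benchmark is the proof in that reference, whose overall architecture your outline reproduces correctly: localize at the finitely many closed points of bad reduction, smoothen a proper flat model to obtain a weak N\'eron model, extract a birational group law, apply Weil's extension theorem, and deduce the full mapping property from the extension property for \'etale points. Two of your steps, however, gloss over the genuinely hard points, and one of them as stated would fail. The passage from the weak N\'eron model $X'$ to a birational group law is not achieved by ``further smoothening the closure of the graph of the group law and restricting to the smooth locus'': a weak N\'eron model is already smooth and separated by construction, and the real obstruction is that translation by a generic section of $A_\eta$ need not preserve all components of the special fiber of $X'$, so the multiplication map is in general not an $S$-birational group law on all of $X'$ no matter how one modifies the graph. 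The missing idea is to extend a left-invariant volume form $\omega$ of $A_\eta$ over $X'$, measure its order of vanishing along each irreducible component of the special fiber, and discard every non-minimal component; only on the resulting $R$-dense open subscheme does one obtain a strict birational group law to which Weil's extension theorem \cite[Ch.~5]{BLR90} applies (this is \cite[4.3]{BLR90} and is the conceptual heart of the proof). Secondly, ``faithfully flat descent'' from the strict henselization is not automatic: $R^{sh}$ is not of finite presentation over $R$, so one must first descend to a finite \'etale extension by a limit argument, and effectivity of the resulting Galois descent datum is not formal --- it uses quasi-projectivity of the candidate model (see \cite[6.5]{BLR90}). With those two repairs your sketch is the standard proof.
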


Now we are ready to prove our main theorem:

\begin{proof}[Proof of Theorem \ref{thm: num div extend for semi-stable fm over curve}]
We prove with a weaker assumption that $f$ is locally stable (rather than semi-stable) with klt generic fiber, then one can check ($S_2$ and $R_1$) that $X$ is normal, and normal after any finite base change $S'\to S$ if $S'$ is normal. Actually, by \cite[Proposition 2.15]{Kol23} $X,X'$ have only klt, hence also rational singularities. Additionally, we can take the Stein factorization and assume that $f_*\Oo_X=\Oo_S$. 

We first show that we can reduce to case when $X_s$ is simple normal crossing for all $s\in S$, and $f$ admits a section.

As in the proof of Corollary \ref{cor: num div extends when Pic0 proper}, we can easily find a projective morphism $S'\to S$, where $S'$ is smooth such that the base change $f':X'\to S'$ admit a section, notice that this is a finite base change and by Lemma \ref{lem: num trivial div stable under finite morph} it suffice to show $L_U$ could extends to a relatively numerically trivial divisor on $X'$. Hence we may assume $f$ admits a section and then any further base change would also has a section.

By \cite[Chapter II]{KKMS73} (Semi-stable reduction) we know that there is a modification of $f$:
\begin{center}$\xymatrix{
X''\ar@{->}[dr]_{g}\ar@{->}[r]^{h}& X'\ar@{->}[r]^{\varphi_X}\ar@{->}[d]^{f'} & X\ar@{->}[d]^{f}\\
 & S'\ar@{->}[r]^{\varphi} & S\\
}$
\end{center}
where $f':X'=X\times_{S}S'\to S'$ is the base change of the finite morphism $\varphi:S'\to S$ and $h$ is projective birational, such that 
\begin{enumerate}
    \item $S'$ and $X''$ are smooth, and
    \item any fiber $X''_{s'}$ is reduced (as scheme) with simple normal crossing support.
\end{enumerate}
Since $X'$ has only rational singularities, by Lemma \ref{lem: num trivial div stable under finite morph} again and Lemma \ref{lem: num trivial div descends for rational sing} it suffices to show $L_U$ extends to a relatively numerically trivial divisor on $X''$ (over $S'$), hence we may also assume that $f$ is semi-stable with simple normal crossing fibers.

Now we can check that
\begin{enumerate}
    \item[(i)] $\Pic_{X/S}$ exists and represents $\Pic_{(X/S)}$.
    \item[(ii)] $\Pic^0_{X/S}$ exists and is an open subscheme of $\Pic_{X/S}$.
    \item[(iii)] $\Pic^0_{X/S}$ is semi-abelian, and the generic fiber is an abelian variety.
\end{enumerate}
For (i), since $S$ is a curve and the generic fiber $X_\eta$ is normal and geometrically connected, we only need to check the two conditions of Lemma \ref{lem: Pic exists for nice family} for closed fibers, which is obvious.

\noindent For (ii), fibers of locally stable family are slc so it follows from Proposition \ref{prop: Pic^0 of good family exists}.

\noindent For (iii), $\Pic^0_{X_\eta/k_\eta}$ is abelian since $X_\eta$ is normal, $\Pic^0_{X_s/k_s}$ is semi-abelian by Proposition \ref{prop: pic of snc is semi-abelian}.

By Lemma \ref{lem: num 0 multiple to Pic 0}, possibly after a multiplication, we may also assume that $(L_U)_\eta\in\Pic^0(X_\eta)$. Lemma \ref{lem: num extension is unique for flat fm} tells us it suffices to extend $L_U|_{X_\eta}$, since then the extension will automatically agree with $L_U$ over $U$. Let $\sigma_\eta\in\Pic^0_{X_\eta/k_\eta}$ be the generic section corresponding to the line bundle $(L_U)|_{X_\eta}$, we want to show that $\sigma_\eta$ extends to a global section in $\Pic^0_{X/S}$ after a multiplication.

Let $A$ be the N\'eron model of $(\Pic^0_{X/S})_\eta$ over $S$, then there is a canonical morphism induced by the universal N\'eron mapping property:
\[
\mu:\Pic^0_{X/S}\to A,
\]
where $\mu$ is an open immersion of group schemes, and is an isomorphism on the identity components by Proposition \ref{prop: semi-abelian open immerse to Neron model} below. 

The N\'eron mapping property says that $\sigma_\eta$ extends to a section $\sigma$ in $G$. Since $\Pic^0_{X_\eta/k_\eta}$ is abelian and $S$ is a curve, there exists a finite set 
\[
\Gamma:=\{s\in S~|~A_s \text{ is not abelian}\}
\]
The numbers of connected components of $G_s,s\in \Gamma$ is finite since $A_s$ is of finite type, and we can take $m$ to be the lowest common multiple of them. Since 
$$
\sigma\in A(S):=\Hom_S(S,A)
$$
is a element of an abelian group, $m\sigma$ makes sense and can be viewed as a section of $A\to S$.
Notice that 
$A_s/\Pic^0_{X_s/k_s}$ is a finite group whose order divides $m$, therefore we have
\[
(m\sigma)(s)=m\cdot\sigma(s)\in\Pic^0_{X_s/k_s},~\forall s\in S,
\]
which means $m\sigma$ actually factors through $\Pic^0_{X/S}$. Therefore $m\sigma$ corresponds to a line bundle $L'$ on $X$ such that $L'|_{X_\eta}=mL_U|_{X_\eta}$ since $m\sigma|_\eta=m\sigma_\eta$ by our construction.

\end{proof}

\begin{prop}[{\cite[7.4.3]{BLR90}}]\label{prop: semi-abelian open immerse to Neron model}
Let $A_\eta$ be an abelian variety over $k(\eta)$ with N\'eron model $A$, and let $G$ be a semi-abelian $S$-scheme such that $G_\eta\simeq A_\eta$. Then the canonical morphism $G\to A$ is an open immersion (of group schemes) and it is an isomorphism
between the identity components of $G_s$ and $A_s$ for $s\in S$.     
\end{prop}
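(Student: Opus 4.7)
The plan is to construct the canonical morphism $\phi:G\to A$ using the N\'eron mapping property, verify it is a group homomorphism, show its kernel is trivial in every fiber, and thereby conclude $\phi$ is an open immersion inducing the asserted isomorphism on identity components. To begin, since $G$ is smooth, separated, and of finite type over $S$ with $G_\eta\simeq A_\eta$, the N\'eron mapping property yields a unique $S$-morphism $\phi:G\to A$ extending the generic isomorphism. The uniqueness clause, applied to the pair of morphisms $G\times_S G\to A$ given by $m_A\circ(\phi\times_S\phi)$ and $\phi\circ m_G$ (which agree over $\eta$), together with analogous statements for the identity section and inversion, forces $\phi$ to be a homomorphism of $S$-group schemes.

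The heart of the argument is to control the kernel $K:=\Ker\phi$, a closed $S$-subgroup scheme of $G$ with $K_\eta=\{e\}$, and specifically to prove $K_s^0=\{e\}$ for every $s\in S$. Since $G_s$ is semi-abelian, any smooth connected subgroup of $G_s$ is again semi-abelian, so the obstruction is to rule out positive-dimensional ``vertical'' semi-abelian subschemes of $G$ killed by $\phi$. This is where I expect the main difficulty to lie: it requires a rigidity input for semi-abelian $S$-schemes (the content of BLR \S7.1--7.3), to the effect that a positive-dimensional $K_s^0$ would deform, after a suitable \'etale base change, to a non-trivial section propagating to $\eta$ and contradicting $K_\eta=\{e\}$.

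Granted $K_s^0=\{e\}$ for every $s$, the morphism $\phi$ is unramified; since $G$ and $A$ are both smooth of the same relative dimension $\dim A_\eta$, $\phi$ is \'etale. Base-changing $\phi$ along the identity section $e_A:S\hookrightarrow A$ exhibits $K\to S$ as \'etale, and the identity section $e_G:S\hookrightarrow K$ is then open and closed; its complement is \'etale over $S$ with empty generic fiber, hence empty, so $K=e_G(S)$. Therefore $\phi$ is \'etale and universally injective, hence an open immersion. Finally, $\phi$ restricts to an \'etale homomorphism $G_s^0\to A_s^0$ of smooth connected $k(s)$-group schemes of the same dimension $\dim A_\eta$; its image is an open and closed subgroup, forcing it to be an isomorphism onto $A_s^0$.
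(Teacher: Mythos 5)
Your overall architecture is sound, and for what it is worth the paper does not prove this proposition either: it cites \cite{BLR90} 7.4/3 and records only the key idea in the remark that follows. Your construction of $\phi$ via the N\'eron mapping property, the verification that it is a homomorphism, and the entire endgame --- from ``$K_s^0=\{e\}$ for all $s$'' to \'etaleness, to $K=e_G(S)$ via the clopen-complement argument, to ``\'etale plus universally injective implies open immersion,'' and finally to the identity components matching because an open subgroup of a connected group scheme over a field is the whole group --- are all correct.

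The gap is exactly at the step you flag as the main difficulty, and the mechanism you sketch for closing it is not the right one. There is no sense in which a positive-dimensional $K_s^0$ ``deforms to a non-trivial section propagating to $\eta$'': the kernel $K$ is a closed subgroup scheme with trivial generic fiber and could a priori be concentrated entirely over closed points, so no rigidity statement about $K$ itself will manufacture a section over $\eta$. The actual argument --- the one the paper's remark after the proposition points to --- runs through torsion. If $K_s^0$ were positive dimensional, it would be a nontrivial connected subgroup of the semi-abelian $G_s^0$ and hence would contain a nonzero $n$-torsion point $x$ for suitable $n$ invertible in $k(s)$ (automatic here in characteristic zero); this is precisely where semi-abelianness is indispensable, since a copy of $\mathbb{G}_a$ inside $K_s^0$ carries no such torsion and the statement would fail. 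After passing to the strict henselization at $s$, the \'etaleness of $[n]$ on $G$ lifts $x$ to an $n$-torsion section $\tilde x$ of $G$, and $\tilde x_\eta\neq e$ because $G$ is separated. Then $\phi\circ\tilde x$ is a section of the quasi-finite, unramified, separated $S$-scheme $A[n]$ taking the value $e$ at $s$; such a section is a clopen copy of the local base through $e(s)$, hence coincides with the identity section, so $\phi_\eta(\tilde x_\eta)=e$ and therefore $\tilde x_\eta=e$ since $\phi_\eta$ is an isomorphism --- a contradiction. Without this torsion-lifting input (equivalently, the density of prime-to-residue-characteristic torsion in semi-abelian schemes from \cite{BLR90} \S 7.1--7.3), your proof does not close; with it, the rest of your write-up goes through as written.
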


We don't provide the proof here but we remark that the essential point is the torsion points in a semi-abelian variety are topologically (Zariski, \'etale, fppf) dense.

\section{Higher dimensional counterexamples}

When the base has $\dim S\ge2$, we shall not expect a relatively numerically trivial divisor could extend in general, even after any reasonable modification of the family.

\begin{example}\label{ex: counter example over M_1,2}
Let $\overline{\mathcal{M}}_{1,2}$ be the moduli stack of stable genus one curves with two marked points (on the smooth locus) and $(\mathcal{U}, \Sigma_1,\Sigma_2)\to\overline{\mathcal{M}}_{1,2}$ be the universal family. There are five types of closed points on $\overline{\mathcal{M}}_{1,2}$, whose fibers on the universal family are
\begin{enumerate}
    \item[(I)] an elliptic curve with two distinct points,
    \item[(II)] a nodal cubic with two distinct points,
    \item[(III)] a circle consisting of two $\mathbb{P}^1$ with two points on different components,
    \item[(IV)] an elliptic curve attached with a $\Pp^1$, two distinct points on the $\mathbb{P}^1$,
    \item[(V)] a nodal cubic attached with a $\mathbb{P}^1$ on the smooth locus, two distinct points on the $\mathbb{P}^1$.
\end{enumerate}
Let $\mathcal{L}$ be the line bundle/Cartier divisor on $\mathcal{U}$ defined by $\Sigma_1-\Sigma_2$. Then the restriction of $\mathcal{L}$ over the points of type (I,II,IV,V) would be numerically trivial, but not over the point of type (III). 

Now let $S$ be any integral projective variety with a morphism $S\to\overline{\mathcal{M}}_{1,2}$ whose image contains the locus of type (II), then there exists an open subset $U\subseteq S$ such that the pullback line bundle/Cartier divisor $\mathcal{L}_U$ is numerically trivial over $U$, but we claim that 

\textit{For any $m\neq0$, there is no Cartier divisor $L$ on the base change family $\mathcal{U}_S$ such that $L\equiv_S 0$ and $L|_U\sim_U m\mathcal{L}_U$.}

Indeed, assume on the contrary that such $(m,L)$ exists, possibly after base change we may assume $S$ is smooth and $\mathcal{U}_S$ is normal. By Lemma \ref{lem: num extension is unique for flat fm} we must have $L|_{\mathcal{U}_V}\sim_{V} m\mathcal{L}_V$, where the open subset $V\subset S$ is the locus of type (I,II,IV,V).

Let $\overline{\mathrm{M}}_{1,2}$ be the coarse moduli space, which is actually isomorphic to a weighted $(2,3)$-blow up of the weighted projective plane at
\[
[1:0:0]\in \Pp(1,2,3).
\]
Let $C'$ be the closure of locus of type (II) in $\overline{\mathrm{M}}_{1,2}$, set theoretically it is the union of points of type (II,III,V) and we can check that $C'$ is an irreducible curve (actually isomorphic to $\Pp^1$). Now let 
$$\phi:S\to\overline{\mathcal{M}}_{1,2}\to\overline{\mathrm{M}}_{1,2}$$
be the composition, then $C'\subseteq\phi(S)$ from our assumption and we may choose a projective irreducible curve $C\subset S$ dominating $C'$. Notice that $V\cap C$ is dense and open in $C$ and by further base change we may assume $C$ is smooth.

Let $(\mathcal{U}_C,\Sigma_{1,C},\Sigma_{2,C})\to C$ be the base change family, then we have 
\[
m(\Sigma_{1,C}-\Sigma_{2,C})=m\mathcal{L}_C,~(m\mathcal{L}_C)|_{\mathcal{U}_{V\cap C}}\sim_{V\cap C} L|_{\mathcal{U}_C},
\]
and by construction the closed fibers of the family are also of type (II,III,V). By Lemma \ref{lem: Pic exists for nice family} we know $\Pic_{\mathcal{U}_C/C}$ exists, then we can compute the $\Pic^0_{\mathcal{U}_C/C}$ by using Proposition \ref{prop: Pic^0 of good family exists}. Fiberwise computation tells us that 
$$
(\Pic^0_{\mathcal{U}_C/C})_t=\Pic^0_{\mathcal{U}_t/k_t}\simeq \mathbb{G}_m, ~\forall t\in C,
$$
therefore the group scheme is actually a $\mathbb{G}_m$ bundle with a canonical identity section (given by the trivial divisor), and notice that 
$$
\Aut(\mathbb{G}_m,\{1\})\simeq\mathbb{Z}/2\mathbb{Z},
$$ which is generated by $\lambda\mapsto\frac{1}{\lambda}$, so possibly after a cyclic \'etale cover $C''\to C$ to kill the monodromy we may assume that $\Pic^0_{\mathcal{U}_C/C}\simeq\mathbb{G}_m\times C$ . 

Now the Cartier divisor $m(\Sigma_{1,C}-\Sigma_{2,C})$ is numerically trivial over $V\cap C$, it corresponds to a rational section
$$
\alpha_{m,C}:C\dashrightarrow (\Pic^0_{\mathcal{U}_C/C},0_C)\simeq(\mathbb{G}_m\times C,\{1\}\times C).
$$
where $\alpha_{m,C}$ is defined over points of $V\cap C$, which is of type (II,V). We have
\begin{itemize}
    \item For any $t\in C$ of type (II), $\alpha_{m,C}(t)=\Sigma_{1,t}-\Sigma_{2,t}\neq 0$ in $\Pic^0(\mathcal{U}_t)$.
    \item For any $t\in C$ of type (V), $\alpha_{m,T}(t)=0\in\Pic^0(\mathcal{U}_t)$.
\end{itemize}
Since $L|_{\mathcal{U}_C}\equiv_C 0$, it gives a section $\alpha_L$ of $\Pic^0_{\mathcal{U}_C/C}$ such that
$$
(\alpha_{m,C})|_{V\cap C}=(\alpha_L)|_{V\cap C},
$$
then we must have $\alpha_{m,C}=\alpha_L$ is a regular section since $\Pic^0_{\mathcal{U}_C/C}$ is separated. Then we the composition
\[
C\stackrel{\alpha_{m,C}}{\longrightarrow}\mathbb{G}_m\times C\stackrel{\mathrm{pr_1}}{\longrightarrow} \mathbb{G}_m
\]
is non-constant. However, this is impossible since $C$ is a proper variety but $\mathbb{G}_m$ is affine.

\end{example}

\begin{proof}[Proof of Theorem \ref{thm: num trivial cannot extend}]
We still use the notations in Example \ref{ex: counter example over M_1,2}. Let $S\to\overline{\mathcal{M}}_{1,2}$ be any morphism from a smooth projective variety $S$ such that the induced morphism $S\to\overline{\mathrm{M}}_{1,2}$ is dominant. Let $g:X\to \mathcal{U}_S$ be any projective birational morphism, where $X$ is smooth. Let $U\subset S$ be any non-empty open subset, then we claim that $(mg^*\mathcal{L})|_{X_U}$ cannot extends to a relatively numerically trivial Cartier divisor $L$ for any $m\neq0$.

Indeed, if such $L$ exists, notice that $\mathcal{U}_S$ is normal and klt, then $L$ descends to a relatively numerically trivial divisor on $\mathcal{U}_S$ by Lemma \ref{lem: num trivial div descends for rational sing}, which contradicts the argument in Example \ref{ex: counter example over M_1,2}.

Then the theorem follows by finding any semi-stable reduction of the family $\mathcal{U}_S\to S$.

\end{proof}

\begin{proof}[Proof of Theorem \ref{thm: nef div cannot extend}]
We will use the notations in Example \ref{ex: counter example over M_1,2}. Let $S'\to \overline{\mathcal{M}}_{1,2}$ be a dominant morphism from a proper smooth variety and the open locus $S\subset S'$ of type (I). Then it suffices to verify the statement for every flat $\bar X\to \bar S$ such that
\begin{enumerate}
    \item[(i)] $\bar X\to \mathcal{U}_{\bar S}, \bar S\to S'$ are proper birational morphisms,
    \item[(ii)] $\bar X$ is normal and $\bar S$ is smooth.
\end{enumerate}

By Example \ref{ex: counter example over M_1,2}, we know that the divisors $m(\Sigma_{1,S}-\Sigma_{2,S})$ cannot extend to a relatively numerically trivial divisor on $\bar X$ over $\bar{S}$ for any $m\neq 0$. Now both
$$
\Sigma_{1,S}-\Sigma_{2,S},~\Sigma_{2,S}-\Sigma_{1,S}
$$ 
are relative nef divisors, if both of them could extends to a relatively nef divisor $L_{+},L_{-}$ on $\bar X$ satisfying (3)-(4), $\bar X$ has klt sing. Then $L_{+}+L_{-}$ is a nef Cartier divisor such that the restriction over $S$ is trivial, therefore we may think $L_{+}+L_{-}$ as an effective vertical Cartier divisor, hence has to be a pullback from $\bar S$ since it is nef over $\bar S$. This implies that $L_{+}+L_{-}$ is actually numerically trivial over $\bar S$, so the relatively nef divisors $L_{+},L_{-}$ must also be relatively numerically trivial. This is impossible by Example \ref{ex: counter example over M_1,2} (cf. the proof of Theorem \ref{thm: num trivial cannot extend}).

\end{proof}


\end{document}